\title[Equivariant minimal immersions]{On the connected components of the moduli space of equivariant minimal surfaces in $\CH^2$.}
\author{Ian McIntosh}
\address{Department of Mathematics\\ University of York\\
York YO10 5DD, UK}
\email{ian.mcintosh@york.ac.uk}
\subjclass{20H10,53C43,58E20}
\keywords{Minimal surface, Higgs bundle, complex hyperbolic plane, nilpotent cone}
\date{July 8, 2021}
\newcommand{\Z}{\mathbb{Z}}
\newcommand{\C}{\mathbb{C}}
\newcommand{\Ct}{\mathbb{C}^\times}
\newcommand{\R}{\mathbb{R}}
\renewcommand{\H}{\mathbb{H}}
\newcommand{\CP}{\mathbb{CP}}
\newcommand{\CH}{\mathbb{CH}}
\newcommand{\RH}{\mathbb{RH}}
\renewcommand{\P}{\mathbb{P}}
\newcommand{\caA}{\mathcal{A}}
\newcommand{\caC}{\mathcal{C}}
\newcommand{\caD}{\mathcal{D}}
\newcommand{\caF}{\mathcal{F}}
\newcommand{\caH}{\mathcal{H}}
\newcommand{\caI}{\mathcal{I}}
\newcommand{\caM}{\mathcal{M}}
\newcommand{\caN}{\mathcal{N}}
\newcommand{\caO}{\mathcal{O}}
\newcommand{\caP}{\mathcal{P}}
\newcommand{\caR}{\mathcal{R}}
\newcommand{\caS}{\mathcal{S}}
\newcommand{\caT}{\mathcal{T}}
\newcommand{\caU}{\mathcal{U}}
\newcommand{\caV}{\mathcal{V}}
\newcommand{\caW}{\mathcal{W}}
\newcommand{\caZ}{\mathcal{Z}}
\newcommand{\KER}{\mathit{Ker}}
\newcommand{\COKER}{\mathit{Coker}}
\newcommand{\fE}{\mathfrak{E}}
\newcommand{\so}{\mathfrak{so}}
\newcommand{\su}{\mathfrak{su}}
\newcommand{\fg}{\mathfrak{g}}
\newcommand{\fh}{\mathfrak{h}}
\newcommand{\fm}{\mathfrak{m}}
\newcommand{\fu}{\mathfrak{u}}
\newcommand{\Aut}{\operatorname{Aut}}
\newcommand{\End}{\operatorname{End}}
\newcommand{\Hom}{\operatorname{Hom}}
\newcommand{\Sec}{\operatorname{Sec}}
\newcommand{\im}{\operatorname{im}}
\newcommand{\ad}{\operatorname{ad}}
\newcommand{\tr}{\operatorname{tr}}
\newcommand{\grad}{\operatorname{grad}}
\newcommand{\0}{\mathbf{0}}
\newcommand{\caMtau}{\caM(\Sigma,\CH^2)_\tau}
\newcommand{\oM}{\overline{\mathcal{M}}}
\newcommand{\oV}{\overline{\mathcal{V}}}
\newcommand{\oU}{\overline{\mathcal{U}}}
\newtheorem{thm}{Theorem}[section]
\newtheorem{prop}[thm]{Proposition}
\newtheorem{lem}[thm]{Lemma}
\newtheorem{cor}[thm]{Corollary}
\theoremstyle{remark}
\newtheorem{rem}{Remark}[section]
\numberwithin{equation}{section}
\begin{document}

\begin{abstract}
An equivariant minimal surface in $\CH^n$ is a minimal map of the Poincar\'{e} disc into $\CH^n$ which intertwines 
two actions of the fundamental group of a closed surface $\Sigma$: a Fuchsian representation on the disc and an
irreducible action by isometries on $\CH^n$. The moduli space of these can been studied by relating
it to the nilpotent cone in each moduli space of $PU(n,1)$-Higgs bundles over the conformal surface corresponding to
the map.  By providing a necessary condition for points
on this nilpotent cone to be smooth this article shows that away from the points corresponding to branched minimal immersions 
or $\pm$-holomorphic immersions the moduli space is smooth. The argument is easily adapted to show that for $\RH^n$ the
full space of (unbranched) immersions is smooth. 
For $\CH^2$ we show the connected components of the moduli space of minimal 
immersions are indexed by the Toledo invariant and the Euler number of the normal bundle of the immersion. 
This is achieved by studying the limit points of the $\Ct$-action on the nilpotent cone. 
It is shown that the limit points as $t\to 0$ lead only to branched minimal immersions or $\pm$-holomorphic immersions.
In particular, the Euler number of the normal bundle can only jump by passing through branched minimal maps.
\end{abstract}

\maketitle

\section{Introduction.}

Let $\Sigma$ be a closed oriented surface of genus $g\geq 2$ and let $N$ be a noncompact irreducible symmetric space. Denote
by $G$ the identity component of the isometry group of $N$.
By an equivariant minimal surface we mean a minimal immersion $f:\caD\to N$ of the Poincar\'{e} disc $\caD$
which intertwines the action of a Fuchsian representation $c:\pi_1\Sigma\to\Aut(\caD)$ with an irreducible
representation $\rho:\pi_1\Sigma\to G$. By a theorem of Corlette \cite{Cor} (generalising a theorem of Donaldson 
for $N=\RH^3$ \cite{Don}), the triple $(f,c,\rho)$ is uniquely
determined by the pair $(c,\rho)$, and if one considers that $f$ is essentially unchanged by pre-composition or post-composition
by isometries, then one only needs the conjugacy classes of $c$ and $\rho$. Hence there is a natural way to assign a
topology to the set $\caM(\Sigma,N)$ of equivalence classes $[f,c,\rho]$ of equivariant minimal surfaces by embedding it
in $\caT_g\times \caR(\pi_1\Sigma,G)$ where $\caT_g$ is the Teichm\'{u}ller space of $\Sigma$ and $\caR(\pi_1\Sigma,G)$ is
the character variety of $G$., i.e.,  the moduli space of reductive representations of $\pi_1\Sigma$ into $G$ up to conjugacy.

This topology for $\caM(\Sigma,N)$ was first proposed by Loftin and the author in \cite{LofM18} but prior to this, in \cite{LofM19}, 
we had studied the set $\caM(\Sigma,\CH^2)$ and provided parametrisations for certain subsets which we called
``components'', simply to mean ``parts whose union is the whole''. We showed that each parametrisation equipped 
the component with
the structure of a connected smooth complex manifold, most of which had the same dimension. These parametrisations exploited the nonabelian
Hodge correspondence, through which one can identify an equivariant minimal surface with a $PU(2,1)$-Higgs bundle having
certain properties. However, since in \cite{LofM19} we did not equip the total space $\caM(\Sigma,\CH^2)$ 
with a topology a priori, the question of its connected components had no framework in which to be addressed. One of the aims of this 
article is to answer this question now, given the topology described above (in a nutshell, the components identified in
\cite{LofM19} are, with a few exceptions, the connected components). A key to understanding the connected components 
lies in having a criterion for points of the moduli space to be smooth, and this is provided below for $\caM(\Sigma,N)$ 
when $N$ is either $\CH^n$ or $\RH^n$. The second aim of this article is to show how these components fit together inside
the larger set $\oM(\Sigma,\CH^2)$ obtained by including all branched minimal surfaces. We show that this is not smooth, since
singularities occur where the closures of the connected components of $\caM(\Sigma,\CH^2)$ meet. Note that $\oM(\Sigma,\CH^2)$
was the space studied in \cite{LofM19}, and the distinctive role of branched immersions explained here was not at all apparent
during the writing of \cite{LofM19}.

To summarise the results below, first recall that a $PU(n,1)$-Higgs bundle can be represented by a pair
$(E,\Phi)$ where $E\to\Sigma_c$ is a holomorphic vector bundle of rank $n+1$ over the Riemann surface $\Sigma_c\simeq\caD/c$.
This has a decomposition $E=V\oplus 1$,
where $V$ is rank $n$ and $1$ denotes the trivial line bundle. Then $\Phi$ is a holomorphic one-form with values in
$\Hom(1,V)\oplus\Hom(V,1)$. We write $\Phi=(\Phi_1,\Phi_2)$ to denote the two components. 
In general $(E,\Phi)$ corresponds to an equivariant,
but possibly branched, minimal surface when $\tr(\Phi^2)=0$. Zeroes of $\Phi$ are branch points of the minimal surface 
and therefore we require $\Phi$ to have no zeroes (i.e., $\Phi_1,\Phi_2$ have no common zeroes) to obtain a point in 
$\caM(\Sigma,\CH^n)$.  
From \cite{LofM19} we know that one of $\Phi_1,\Phi_2$ is identically zero precisely when the minimal immersion
is either holomorphic or anti-holomorphic. It is convenient to write $\caM(\Sigma,\CH^n)$ as a disjoint union $\caV\cup\caW$ where
$\caW$ is the subvariety of $\pm$-holomorphic maps and $\caV$ is the complement.
Our first main result concerns $\caV$, and in fact applies also to $\caM(\Sigma,\RH^n)\subset \caV$ using the totally geodesic
embedding of $\RH^n$ into $\CH^n$. 
\begin{thm}\label{thm:smooth}
Both $\caV\subset\caM(\Sigma,\CH^n)$ and $\caM(\Sigma,\RH^n)$ are smooth manifolds. They have complex dimension
$(g-1)\dim(G)$ for $G=PU(n,1)$ and $SO_0(n,1)$ respectively.
\end{thm}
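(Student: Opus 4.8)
The plan is to exhibit $\caV$, near any of its points, as the smooth zero locus of the Hitchin map fibred over Teichm\"uller space. Fix a point of $\caV$; it corresponds to a stable $PU(n,1)$-Higgs bundle $(E,\Phi)$ with $E=V\oplus 1$ over some $\Sigma_c$, and irreducibility of $\rho$ makes $(E,\Phi)$ stable and simple. Hence it is a smooth point of the moduli space $\caM_{\mathrm{Higgs}}$, with tangent space the first hypercohomology $\H^1(C^\bullet)$ of the deformation complex
\begin{equation*}
C^\bullet:\quad \caO(\fk^{\C})\xrightarrow{\ \ad\Phi\ }\caO(\fm^{\C})\otimes K,
\end{equation*}
where $\fk^{\C},\fm^{\C}$ are the isotropy and Higgs summands of the Cartan decomposition, $\caO(\cdot)$ is the associated holomorphic bundle, and $K$ is the canonical bundle; here $\fm^{\C}\cong V\oplus V^{*}$, so that $\Phi=(\Phi_1,\Phi_2)$ with $\Phi_1\in H^0(V\otimes K)$ and $\Phi_2\in H^0(V^{*}\otimes K)$. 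For a simple Higgs bundle $\H^0(C^\bullet)=0$ and, by Serre duality, $\H^2(C^\bullet)=0$, so $\caM_{\mathrm{Higgs}}$ is smooth there. Allowing $c$ to vary over $\caT_g$ gives the universal Higgs moduli space, a smooth complex manifold of dimension $\dim\caT_g+\dim_\C\caM_{\mathrm{Higgs}}=(3g-3)+(g-1)\dim G$.

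Since for $PU(n,1)$ every Hitchin invariant is a power of $\tr\Phi^2$, the nilpotent cone coincides with $\{\tr\Phi^2=0\}$, which is exactly the conformality (minimal surface) condition. Thus $\caV$ is the locus in $h^{-1}(0)$, $h:=\tr\Phi^2$, on which $\Phi_1,\Phi_2$ are both nonzero and have no common zero. As $h$ takes values in the $(3g-3)$-dimensional space $H^0(\Sigma_c,K^2)$, the zero locus is a smooth manifold of the asserted dimension $(3g-3)+(g-1)\dim G-(3g-3)=(g-1)\dim G$ provided $h$ is a submersion along $\caV$; equivalently, identifying $T_\rho\caR$ with $\H^1(C^\bullet)$ by nonabelian Hodge, provided the derivative of $h$ in the Higgs directions,
\begin{equation*}
dh(\dot\Phi)=2\tr(\Phi\dot\Phi)=2\bigl(\Phi_2\dot\Phi_1+\dot\Phi_2\Phi_1\bigr),
\end{equation*}
is surjective onto $H^0(K^2)$ at each point of $\caV$. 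The Teichm\"uller factor contributes no obstruction once this fibral map is onto, so this transversality is the whole game.

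The surjectivity of $dh$ is the main obstacle, and it is here that the hypotheses defining $\caV$ enter. I would argue by Serre duality. Writing $H^0(K^2)^{*}\cong H^1(K^{-1})$, $H^0(V\otimes K)^{*}\cong H^1(V^{*})$ and $H^0(V^{*}\otimes K)^{*}\cong H^1(V)$, the cokernel of the restriction of $dh$ to Higgs-field variations is dual to the kernel of
\begin{equation*}
(\,\cup\Phi_2,\ \cup\Phi_1\,):\ H^1(K^{-1})\longrightarrow H^1(V^{*})\oplus H^1(V),
\end{equation*}
so it suffices to show that no nonzero class $\eta\in H^1(K^{-1})$ is annihilated by cup product with both $\Phi_1$ and $\Phi_2$. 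Representing $\eta$ by a $\dbar$-closed $(0,1)$-form valued in $K^{-1}$, the two vanishing conditions produce $s\in\Omega^0(V)$ and $t\in\Omega^0(V^{*})$ with $\dbar s=\eta\,\Phi_1$ and $\dbar t=\eta\,\Phi_2$; the nilpotency relation $\Phi_2\Phi_1=0$ then forces $\Phi_2 s$ and $t\,\Phi_1$ to be holomorphic $1$-forms. The expectation is that, because $\Phi_1$ and $\Phi_2$ are both nonzero and share no zero, such holomorphic data can only be consistent when $\eta=0$; the common-zero locus (branch points) and the vanishing of one $\Phi_i$ ($\pm$-holomorphic maps) are precisely the degenerations that create a nonzero joint kernel, which is why smoothness is claimed only on $\caV$. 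Establishing this vanishing cleanly -- presumably by a local analysis of $(\Phi_1,\Phi_2)$ as sections of $V\otimes K$ and $V^{*}\otimes K$ constrained by $\Phi_2\Phi_1=0$ -- is the technical heart of the argument.

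Finally, the $\RH^n$ case is formally the same. An $SO_0(n,1)$-Higgs bundle is a pair $(W,\Phi_1)$ with $W$ a rank-$n$ orthogonal bundle and $\Phi_2=\Phi_1^{\mathsf T}$ determined by the orthogonal structure, so that conformality reads $\Phi_1^{\mathsf T}\Phi_1=0$ and, as above, this is the nilpotency condition. Since $\RH^n$ carries no invariant complex structure there is no $\pm$-holomorphic locus to excise: every unbranched immersion has $\Phi_1$ nowhere zero, and the Serre-duality argument above -- now for $(\,\cup\Phi_1,\ \cup\Phi_1^{\mathsf T}\,)$ -- applies verbatim to give surjectivity of $dh$. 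Hence the entire space $\caM(\Sigma,\RH^n)$ is smooth, of complex dimension $(g-1)\dim SO_0(n,1)$, by the same dimension count.
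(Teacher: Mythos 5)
Your overall strategy---realise $\caV$ as the zero locus of the fibrewise Hitchin map $h=\tr(\Phi^2)$ over Teichm\"uller space and prove that $h$ is a submersion along $\caV$---is exactly the paper's strategy, and your preliminary reductions (stability/simplicity giving smoothness of the ambient moduli space, the identification of the nilpotent cone with $\{\tr\Phi^2=0\}$ for these rank-one groups, the final dimension count) are sound. The gap is the transversality step, and it is not merely that you leave it as an ``expectation'': the statement you propose to prove is false. You restrict $dh$ to pure Higgs-field variations $(\dot\Phi_1,\dot\Phi_2)\in H^0(V\otimes K)\oplus H^0(V^*\otimes K)$, holding the holomorphic structure of $V$ fixed, and claim (dually) that $(\cup\Phi_2,\cup\Phi_1):H^1(K^{-1})\to H^1(V^*)\oplus H^1(V)$ is injective on $\caV$. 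Test this at a point of $\caV(0,0)$ (a $\tau=0$ immersion with no complex or anti-complex points), where $V$ is a nontrivial extension $0\to K^{-1}\xrightarrow{\Phi_1}V\xrightarrow{\Phi_2}K\to 0$ with class $\xi\in H^1(K^{-2})$. The long exact sequence of this extension gives $\ker(\cup\Phi_1)=\im\bigl(\cup\,\xi:H^0(K)\to H^1(K^{-1})\bigr)$, and the dual extension $0\to K^{-1}\xrightarrow{\Phi_2^t}V^*\to K\to 0$ has class $\pm\xi$, so $\ker(\cup\Phi_2)$ is the \emph{same} subspace. Hence the joint kernel equals $\im(\cup\,\xi)$, and by Serre duality $\cup\,\xi$ vanishes on $H^0(K)$ if and only if $\xi$ annihilates the image of the multiplication map $H^0(K)\otimes H^0(K^2)\to H^0(K^3)$. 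On a non-hyperelliptic curve that multiplication is surjective (Max Noether's theorem), so for \emph{every} $\xi\neq 0$ the joint kernel is nonzero. Thus $dh$ restricted to Higgs-field variations genuinely fails to be surjective at such points, and no local analysis of $(\Phi_1,\Phi_2)$ can rescue the claim; the same objection applies to your ``verbatim'' treatment of $\RH^n$.

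What saves the theorem, and what the paper uses, is precisely the part of the tangent space you discarded: deformations of the holomorphic structure of $E$, equivalently of the extension class $\xi$. The paper works with the full hypercohomology $\H^1(\caA^*)$ of the deformation complex $\caA^0\xrightarrow{\ad\Phi}\caA^1$ and uses the spectral-sequence splitting $\H^1(\caA^*)\simeq H^1(\KER)\oplus H^0(\COKER)$, where $\KER$ and $\COKER$ are the kernel and cokernel \emph{sheaves} of $\ad\Phi$. By Biswas--Ramanan, $d\caI_c$ is the composition of the (automatically surjective) projection onto $H^0(\COKER)$ with the map induced by the sheaf morphism $B(\Phi,\cdot):\COKER\to K^2$. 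The essential computation is then pointwise Lie-algebra linear algebra rather than a global vanishing theorem: on $\caV$ the sheaf map $\ad\Phi$ has corank exactly one, so $\COKER\simeq K^2(-D)$ with $D$ the vanishing divisor of $\Phi$, and by Lemma \ref{lem:rank} $d\caI_c$ is surjective precisely when $D=0$, i.e.\ when $\Phi$ has no zeros. This sheaf-level argument automatically incorporates the extension-class directions whose absence breaks your global-sections argument: cokernel classes need not be represented by global holomorphic sections of $\caA^1$, which is exactly why your expected vanishing fails while the theorem remains true.
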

In fact what we prove is that in the nilpotent cone $\caN^c$ (i.e, the locus of $\tr(\Phi^2)=0$ in the Higgs bundle moduli space 
$\caH(\Sigma_c,G)$ for $G=PU(n,1)$ or $SO_o(n,1)$) the regular points of the Hitchin function $\tr(\Phi^2)$ are precisely
the points for which neither $\Phi_j$ has a zero.

Next we restrict our attention to $n=2$.  For $G=PU(2,1)$ the connected components of $\caR(\pi_1\Sigma,G)$ are
indexed by the Toledo invariant, $\tau$. Using Toledo's convention, as in \cite{LofM19}, this satisfies $\tau\in\tfrac23\Z$ and
$|\tau|\leq 2(g-1)$.  We denote the subspace of triples $[f,c,\rho]$ for which $\rho$ has Toledo invariant $\tau$ by $\caMtau$ 
and it is clear that these are disconnected from each other.  Minimal immersions which are not 
$\pm$-holomorphic have complex and anti-complex points (points $p\in\Sigma$ for which $f_*T_p^{1,0}\Sigma$ is a 
subspace of either $T'\CH^2$ or $T''\CH^2$). These give effective divisors, $D_2$ and $D_1$
respectively. From \cite{LofM19} their degrees $d_j=\deg(D_j)$ satisfy
\begin{eqnarray}
\tau = \tfrac23 (d_2-d_1), & \chi(T\Sigma^\perp) = 2(g-1)-d_1-d_2, \label{eq:tau} \\
0\leq 2d_1+d_2< 6(g-1),&0\leq d_1+2d_2< 6(g-1),\label{eq:d1d2}
\end{eqnarray}
where $T\Sigma^\perp$ is the normal bundle of $f$, which means the quotient of $T\caD^\perp\subset
f^{-1}T\CH^2$ by the natural action of $\pi_1\Sigma$. The inequalities are necessary and sufficient conditions for the
existence of minimal immersions which are not $\pm$-holomorphic. In particular, the degrees $d_1,d_2$ are equivalent information to the
pair $\tau$ and $\chi(T\Sigma^\perp)$. For holomorphic immersions the only topological invariant is the Toledo invariant, which
determines $\chi(T\Sigma^\perp)$ through
\begin{equation}\label{eq:hol}
\chi(T\Sigma^\perp) = -\tfrac32\tau +2(g-1).
\end{equation}
This follows directly by taking the quotient of the direct sum of holomorphic bundles $f^{-1}T'\CH^2 = T^{1,0}\caD \oplus T\caD^\perp$ 
by the $\pi_1\Sigma$-action.

Let $\caV(d_1,d_2)\subset\caV$ denote the subset of those immersions whose
divisors have these degrees fixed. It was shown in \cite{LofM19} that each $\caV(d_1,d_2)$ can be parametrised by 
a smooth connected complex manifold of dimension $8(g-1)$ 
(actually in \cite{LofM19} we included the branched minimal immersions, which occur when $D_1\cap D_2\neq\emptyset$, 
but as these are described by a complex analytic subvariety 
removing these does not affect the connectedness of $\caV(d_1,d_2)$). 
In Lemma \ref{lem:Z} below we show that the parametrisation in \cite{LofM19} is smoothly compatible with the structure 
it inherits from Theorem \ref{thm:smooth}. Since $\caV(d_1,d_2)$
all have the same dimension they are therefore the connected components  of $\caV$. To establish the connected components
of $\caM(\Sigma,\CH^2)$ it remains to see how these fit together with points of $\caW$. Set $\caW_\tau=
\caW\cap\caM(\Sigma,\CH^2)_\tau$ for each $\tau$. We need only consider $\tau\geq 0$ since
$\caM(\Sigma,\CH^2)$ carries a natural real involution $[f,c,\rho]\mapsto [\bar f,c,\bar \rho]$ for which
$\tau(\bar\rho)=-\tau(\rho)$.
\begin{thm}\label{thm:connected}
Let $\caMtau$ denote the space of equivariant minimal immersions with Toledo invariant $\tau$. Then $\caM(\Sigma,\CH^2)_0$ has
connected components $\caV(d_1,d_2)$ for $d_2=d_1$. For $0<\tau<2(g-1)$ the connected components of $\caMtau$ are
$\caV(d_1,d_2)$ with $d_1\neq 0$ and $\caV(0,d_2)\cup\caW_\tau$, while $\caMtau=\caW_\tau$ for $\tau=2(g-1)$. 
\end{thm}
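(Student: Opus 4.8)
The plan is to assemble the connected components of $\caMtau$ from the components $\caV(d_1,d_2)$ of $\caV$, controlling the attachment of the $\pm$-holomorphic locus $\caW_\tau$ by two features of the nilpotent cone $\caN^c$: that the normal Euler number is locally constant on the unbranched locus, and that the $\Ct$-flow contracts $\caN^c$ onto limits which are branched or $\pm$-holomorphic. By Lemma \ref{lem:Z} and the parametrisation of \cite{LofM19}, each nonempty $\caV(d_1,d_2)$ is connected of the common dimension $8(g-1)$, so these are exactly the connected components of $\caV$. For fixed $\tau$, \eqref{eq:tau} forces $d_2-d_1=\tfrac32\tau$ while \eqref{eq:d1d2} selects the admissible pairs. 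At $\tau=2(g-1)$ the inequality $d_1+2d_2<6(g-1)$ becomes $3d_1<0$, so no pair is admissible, $\caV$ is empty and $\caMtau=\caW_\tau$; at $\tau=0$ holomorphic immersions have strictly positive Toledo invariant and anti-holomorphic ones strictly negative, so $\caW_0=\emptyset$ and only the $\caV(d_1,d_1)$ survive. This disposes of both extremes and leaves the range $0<\tau<2(g-1)$.

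For this range I would first separate the components lying inside $\caV$. Every point of $\caMtau$ is an immersion, so the integer $\chi(T\Sigma^\perp)$ is locally constant on $\caMtau$ and hence constant on each connected component. A holomorphic immersion has $\chi(T\Sigma^\perp)=2(g-1)-\tfrac32\tau$ by \eqref{eq:hol}, which equals $2(g-1)-d_1-d_2$ precisely when $d_1=0$, forcing $d_2=\tfrac32\tau$. Thus $\caW_\tau$ can share a component with a piece $\caV(d_1,d_2)$ only if $d_1=0$. Since each $\caV(d_1,d_2)$ is open in $\caMtau$ (being a component of the open set $\caV$) and its closure is disjoint from $\caW_\tau$ when $d_1\neq0$, every such piece is also closed in $\caMtau$ and is therefore a connected component. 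This accounts for the pieces $\caV(d_1,d_2)$ with $d_1\neq0$ in the statement.

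The remaining task, and the main obstacle, is to show that $\caV(0,\tfrac32\tau)\cup\caW_\tau$ is a single connected component. I would attack this in two steps. First, connectedness of $\caW_\tau$: it fibres over the connected Teichm\"uller space $\caT_g$, and each fibre---the holomorphic immersions of a fixed $\Sigma_c$ with Toledo invariant $\tau$, namely the data of one Higgs component identically zero and the other a nowhere-vanishing holomorphic section of the prescribed bundle---is connected by the parametrisation of \cite{LofM19}. Second, the gluing, where the $\Ct$-action is decisive: for a pair in $\caV(0,\tfrac32\tau)$ the orbit $\{[E,t\Phi]\}$ fixes the divisors $D_1,D_2$, hence stays in $\caV(0,\tfrac32\tau)$ for $t\neq0$, while by the contraction property its $t\to0$ limit is branched or $\pm$-holomorphic; the Euler number computation just made excludes the branched alternative and places the limit in $\caW_\tau$. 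The delicate point---where the explicit local model of $\caN^c$ at a $\pm$-holomorphic fixed point is needed---is to verify that the limit is genuinely unbranched and that these limits meet every connected piece of $\caW_\tau$, so that $\caW_\tau\subset\overline{\caV(0,\tfrac32\tau)}$ in $\caMtau$. Since $\caW_\tau\subset\overline{\caV(0,\tfrac32\tau)}$ and both sets are connected, their union is connected; the separation of the previous paragraph shows it is also closed, hence a connected component. Together with the two extreme cases this yields the stated classification.
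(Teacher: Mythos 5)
Your overall skeleton (components of $\caV$ via Lemma \ref{lem:Z}, the two extreme values of $\tau$, separation of $\caV(d_1,d_2)$ with $d_1\neq 0$ from $\caW_\tau$, and gluing of $\caV(0,\tfrac32\tau)$ to $\caW_\tau$) matches the paper's, but the two steps that carry the real content both have gaps. For the separation you assert that $\chi(T\Sigma^\perp)$ is locally constant on $\caMtau$. Since the topology on $\caMtau$ is the one induced from $\caT_g\times\caR(\pi_1\Sigma,G)$, this assertion requires that the equivariant harmonic immersion depends continuously, in $C^1$, on $(c,\rho)$ --- an analytic input you never establish, and which is essentially the ``no jumping of the normal Euler number within the immersion locus'' statement that the theorem itself encodes, so as written the step is close to circular. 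The paper does not argue this way: it proves (for $\tau>0$, $d_1\neq 0$) that $\oV(d_1,d_2)\cap\caW_\tau=\emptyset$ purely on the Higgs-bundle side, by taking a sequence in the unstable manifold $\caU_{C'}$ converging to a zero-free minimum, pushing it down by the continuous projection $\caS_C\to C$ of the stable manifold of the minima, and observing via Proposition \ref{prop:limit0} that all the projected points have Higgs fields with zeroes when $d_1\neq 0$; since having zeroes is a closed condition on $C$, the limit cannot lie in $\caW_\tau$. You would need either to supply the harmonic-map continuity or to reproduce an argument of this kind.

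The second gap is an actual error. You claim that for every point of $\caV(0,\tfrac32\tau)$ the limit $\lim_{t\to 0}(E,t\Phi)$ lies in $\caW_\tau$ because ``the Euler number computation excludes the branched alternative.'' The Euler-number argument, even if granted, constrains only points of $\caMtau$, i.e.\ immersions; the $t\to 0$ limit in general leaves the immersion locus, and indeed Proposition \ref{prop:limit0}(iii),(iv) shows that whenever $V$ is an unstable bundle the limit is a genuinely branched object (branch divisor $D>0$) even when $D_1=0$. So the limit of an arbitrary point of $\caV(0,d_2)$ need not be in $\caW_\tau$, and no topological argument about immersions can force it to be. What is needed --- and what the paper proves as its final lemma --- is the existence of at least one point of $\caV(0,d_2)$ whose bundle $V$ is stable, so that case (ii) of Proposition \ref{prop:limit0} applies and the limit $(E,\Phi_1)$, having nowhere-vanishing $\Phi_1$, lies in $\caW_\tau$; connectedness of $\caW_\tau$ then finishes the gluing. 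This existence is not formal: the paper converts it into the condition $s(\lambda)\geq 1$ for extensions $0\to 1\to\lambda\to K^2(-D_2)\to 0$ and invokes the Lange--Narasimhan secant-variety criterion together with a dimension count for the secant variety. Your proposal contains no substitute for this step; the ``delicate point'' you flag is exactly this one, and the mechanism you offer to resolve it does not work.
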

In particular, the connected components are indexed by $\tau$ and $\chi(T\Sigma^\perp)$: when $d_1=0$ the two equations 
\eqref{eq:tau} become the single equation \eqref{eq:hol}. 

This theorem is proved through a complete description of the limit points of the $\Ct$-action $t\cdot(E,\Phi)=(E,t\Phi)$ 
on $PU(2,1)$-Higgs bundles, for both $t\to\infty$ and $t\to 0$. As well as proving
the theorem, it gives us information about how the closures of the connected components fit together inside the space 
$\oM(\Sigma,\CH^2)$ which includes all branched minimal immersions. On the Higgs bundle side this corresponds to the space
of all $(c,E,\Phi)$ for which $(E,\Phi)$ lies in the nilpotent cone $\caN^c$
excluding the locus $\Phi=0$ (since this corresponds to constant maps).
Each $\caN^c$ is known to be stratified by the unstable manifolds of the downwards Morse flow for the Higgs
field energy $\|\Phi\|^2_{L^2}$: for each connected component $C$ of critical points these agree with
\[
\caU_C = \{(E,\Phi):\lim_{t\to\infty}(E,t\Phi)\in C\}.
\]
It was conjectured in \cite{LofM19}, and is proven below, that except when $C$ consists of local minima (there is
precisely one such $C$ for each $\tau$ and it equals $\caW_\tau$) these unstable manifolds
correspond to the fibres of the analytic family $\caV(d_1,d_2)$ over $\caT_g$. We understand how these fit
together by examining the limit in the opposite direction, as
$t\to 0$. It is well-known that when $E$ is semi-stable (as a vector bundle) the limit $\lim_{t\to 0}(E,t\Phi)$ must have
$\Phi=0$ and therefore this can only occur when $\tau=0$. When $E$ is strictly unstable (and assuming $\tau\geq 0$ without loss of
generality) we show that either $(E,\Phi_1)$ is a Hodge bundle, in which case this is limit, or the limit is determined by
the maximal destabilizing line subbundle of $V$ and always corresponds to a branched minimal immersion. 

Thus we have the the following geometric picture of how these pieces fit together. Each fibre over $c$ of the space 
$\oM(\Sigma,\CH^2)_\tau$ is a union of these unstable
manifolds, and their closures intersect either at branched minimal surfaces or at $\pm$-holomorphic surfaces. Removing
these leaves a disjoint union of components $\caV(d_1,d_2)$, one for each critical manifold of non-minima. In particular, one cannot
pass from one component $\caV(d_1,d_2)$ to another in $\oM(\Sigma,\CH^2)$ without forcing the minimal immersion to branch somewhere. 
Another way of saying this is that $\chi(T\Sigma^\perp)$ can jump but only by passing through branched maps. 


\textbf{Acknowledgements.} The initial stages of this research were carried out while the author was a visitor at the
Simons Center for Geometry and Physics, Stony Brook University, in April 2019 during the programme ``The Geometry and Physics of Hitchin
Systems'', and then subsequently at the MFO Oberwolfach workshop ``The Geometry and Physics of Hitchin Bundles'' in May
2019. I gratefully acknowledge the support of these institutions, and thank the organisers for the opportunity to
participate. I also thank Andy Sanders and John Loftin for the stimulating discussions during the SCGP programme which raised
some of the questions answered here.

\section{Definitions.}
Let $\C^{n,1}$ denote $\C^{n+1}$ equipped with the pseudo-Hermitian inner product characterised by
\[
\langle u,u\rangle = |u_1|^2+\ldots |u_n|^2-|u_{n+1}|^2.
\]
Let $e_1,\ldots,e_{n+1}$ denote the standard basis vectors for $\C^{n+1}$. 
We model $\CH^n$ on the space of lines in $\CP^n$ generated by a negative definite vector and fix a base
point $o=[e_{n+1}]$. 
Throughout this article we set $G=PU(n,1)$, the group of orientation preserving isometries of $\CH^n$, and let $H\simeq U(n)$ 
denote the maximal compact subgroup which fixes the base point $o$. The Lie algebra $\fg=\su(n,1)$ of $G$ has Cartan decomposition 
$\fg=\fh\oplus \fm$ corresponding to the decomposition of elements 
\begin{equation}\label{eq:h+m}
\begin{pmatrix} A & u \\ u^\dagger & a\end{pmatrix} = 
\begin{pmatrix} A & 0 \\ 0 & a\end{pmatrix} + 
\begin{pmatrix} 0 & u \\ u^\dagger & 0\end{pmatrix} 
\end{equation}
where $A\in \fu(n)$, $a=-\tr(A)$, $u\in\C^n$ and $u^\dagger$ is the Hermitian transpose.

Let $\Sigma$ be a closed oriented surface of genus $g\geq 2$ and let $\caD$ denote the Poincar\'{e} disc thought of
as the universal cover of $\Sigma$ (note that $\caD\simeq\CH^1$). A Fuchsian representation $c:\pi_1\Sigma\to PU(1,1)$ equips $\Sigma$ with
a complex structure which will be denoted $\Sigma_c$. We will assume this complex structure is compatible with the
orientation of $\Sigma$ and hence the set conjugacy classes of such Fuchsian representations can be identified with the Teichm\"{u}ller space
$\caT_g$ of $\Sigma$. An equivariant minimal surface in $\CH^n$ is the equivalence class
$[f,c,\rho]$ of a triple consisting of a Fuchsian representation $c$, an indecomposable representation
$\rho:\pi_1\Sigma\to G$ and a minimal immersion $f:\caD\to\CH^n$ which intertwines the actions of $c$ and $\rho$, i.e., 
\[
f\circ c(\delta) = \rho(\delta)\circ f,\quad \forall\delta\in\pi_1\caD.
\]
Equivalence is with respect to the action of $PU(1,1)\times G$ by conjugation on $(c,\rho)$ and the corresponding natural
action on $f$. Indecomposability of $\rho$ means exactly that its image does not lie in a proper subgroup of $G$. By the
same argument as \cite[Lemma 2.3]{LofM18} it is equivalent to the condition that $f$ is linearly full (i.e., its image
does not lie in a totally geodesic copy of $\CH^k$ for $k<n$).
\begin{rem}
We could also say $\rho$ is irreducible in its standard real representation on $\C^{n,1}$, although there is the potential 
for confusion with other meanings of irreducibility (say, for the adjoint representation of the complexification $G^\C$). 
\end{rem}
We use $\caM(\Sigma,\CH^n)$ to denote the set of equivariant minimal surfaces and give it the topology
of its natural embedding
\begin{equation}\label{eq:caF}
\caF:\caM(\Sigma,\CH^n)\to \caT_g\times \caR(\pi_1\Sigma,G);\quad [f,c,\rho]\to [c,\rho],
\end{equation}
into the product of Teichm\"{u}ller space with the character variety $\caR(\pi_1\Sigma,G)$ of reductive
representations of $\pi_1\Sigma$ into $G$ modulo conjugacy. This map is injective by Corlette's uniqueness theorem
for equivariant harmonic maps under these conditions \cite{Cor}. We will denote by $\caM(\Sigma_c,\CH^n)$ the fibre
over fixed $c\in\caT_g$.

Recall that, for each choice of $c$, non-abelian Hodge theory provides a homeomorphism between $\caR(\pi_1\Sigma,G)$ and the 
moduli space $\caH(\Sigma_c,G)$ of polystable $G$-Higgs bundles over $\Sigma_c$ (and this homeomorphism is analytic
away from singularities). Indeed,  for any simple noncompact $G$ there is a homeomorphism between $\caT_g\times\caR(\pi_1\Sigma,G)$ 
and the universal Higgs bundle moduli space $\caH(\caC_g,G)$ over $\caT_g$
\cite[Thm 7.5]{AleC} (here $\caC_g$ denotes the universal Teichm\"{u}ller curve over $\caT_g$). 
This is a complex analytic space for which
the fibre over $c\in \caT_g$ is biholomorphic to $\caH(\Sigma_c,G)$. This homeomorphism is smooth about smooth
points and therefore we obtain an embedding
\begin{equation}\label{eq:caF'}
\caF':\caM(\Sigma,\CH^n)\to \caH(\caC_g,G),
\end{equation}
when $G=PU(n,1)$. This equips $\caM(\Sigma,\CH^n)$ with a complex analytic structure. 

We can easily characterise the image of $\caF'$.
Recall that in the projective equivalence class of the $G$-Higgs bundle $(E,\Phi)$, 
$E$ can be taken to have the form $V\oplus 1$ where $V$ is
a holomorphic rank $n$ bundle over $\Sigma_c$ and  $1$ denotes the trivial line bundle. The Higgs field $\Phi$ satisfies
\[
\Phi\in (\Hom(1,V)\oplus\Hom(V,1))\otimes K,
\]
and we will write $\Phi=(\Phi_1,\Phi_2)$ to indicate the two components for this decomposition. It is well-known
that the harmonic map corresponding to $(E,\Phi)$ is weakly conformal (therefore branched minimal) when $\tr(\Phi^2)=0$.
The branch points correspond exactly to the zeroes of $\Phi$. Consequently we have the following elementary observation.
\begin{lem}
The pair $(E,\Phi)$ lies in the image of $\caF'$ if and only if it is stable (hence indecomposable) with $\tr(\Phi^2)=0$ and
$\Phi$ having no zeroes.
\end{lem}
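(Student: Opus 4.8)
The statement is an assembly of the correspondences recalled earlier in this section, so the plan is to read off each of the three listed conditions from a property of the triple $[f,c,\rho]$, and conversely. Two dictionaries are in play: the $G$-non-abelian Hodge correspondence over $\Sigma_c$, matching (poly)stable $G$-Higgs bundles with (reductive) representations and sending stable bundles to irreducible representations; and the identification, recalled immediately above, of weak conformality of the harmonic map with $\tr(\Phi^2)=0$ and of branch points with zeroes of $\Phi$.

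For the forward implication, suppose $[f,c,\rho]\in\caM(\Sigma,\CH^n)$. Non-abelian Hodge theory over $\Sigma_c$ produces a polystable $G$-Higgs bundle $(E,\Phi)$ of the form $E=V\oplus 1$ whose associated $c$-equivariant harmonic map is $f$. Because $\rho$ is indecomposable --- equivalently, by the remark above, irreducible in its standard representation on $\C^{n,1}$ --- the correspondence yields a \emph{stable} Higgs bundle; and since a stable Higgs bundle is simple it is in particular indecomposable, which accounts for the parenthetical in the statement. That $f$ is a minimal \emph{immersion} means exactly that it is a weakly conformal harmonic map with no branch points, and these two features translate directly into $\tr(\Phi^2)=0$ and the absence of zeroes of $\Phi$. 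Hence $(E,\Phi)$ satisfies all three conditions.

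Conversely, given a stable $(E,\Phi)$ with $\tr(\Phi^2)=0$ and $\Phi$ nowhere vanishing, stability implies polystability, so non-abelian Hodge returns a reductive representation $\rho$ and a $c$-equivariant harmonic map $f$; stability further forces $\rho$ to be irreducible, hence indecomposable. The vanishing $\tr(\Phi^2)=0$ makes $f$ weakly conformal, so it is a possibly branched minimal map, and the hypothesis that $\Phi$ has no zeroes removes every branch point, leaving a genuine minimal immersion. Thus $[f,c,\rho]$ is an equivariant minimal surface with $\caF'([f,c,\rho])=(E,\Phi)$. The one point demanding real care --- and the step I expect to be the main obstacle --- is the precise equivalence between indecomposability of $\rho$ and stability of the $G$-Higgs bundle, since for $G=PU(n,1)$ one must match linear irreducibility on $\C^{n,1}$ with the correct stability notion for a bundle of the special shape $E=V\oplus 1$ with off-diagonal $\Phi$, and check that this shape is preserved in both directions of the correspondence; the remaining assertions are transcriptions of facts already recorded above.
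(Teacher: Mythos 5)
Your proposal is correct and follows the same route the paper intends: the paper presents this lemma as an ``elementary observation'' following immediately from the facts it has just recalled (non-abelian Hodge theory matching stability with indecomposability of $\rho$, weak conformality with $\tr(\Phi^2)=0$, and branch points with zeroes of $\Phi$), and offers no further proof. Your write-up simply makes those translations explicit in both directions, including the stability/indecomposability matching that the paper handles by its earlier remark and the reference to \cite[Lemma 2.3]{LofM18}.
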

Each equivariant minimal surface $(f,c,\rho)$ has associated to it
the Toledo invariant $\tau$ of $\rho$ and we denote the subset of triples with fixed $\tau$ by $\caM(\Sigma,\CH^n)_\tau$.
Our convention will be that $\tau=-\frac{2}{n+1}\deg(V)$ so that $|\tau|\leq 2(g-1)$: this matches Toledo's original
definition and is the one used in \cite{LofM19}. 

We will also write $\caM(\Sigma,\CH^n)$ as a disjoint union $\caV\cup\caW$ where $\caW$ consists of all $\pm$-holomorphic
immersions and $\caV$ is the complement. Each of these has its subsets $\caV_\tau$, $\caW_\tau$ containing those elements
with fixed Toledo invariant. We note that under the non-abelian Hodge correspondence elements of $\caW$ correspond to
Higgs bundles with $\Phi_2=0$ for $\tau>0$ or $\Phi_1=0$ for $\tau<0$. Since we always have $\Phi\neq 0$ we know
$\caW_0=\emptyset$. Note that $\caW_\tau$ corresponds to the length two Hodge bundles \cite{LofM19}, which are the minima
of the Higgs field energy $\|\Phi\|^2_{L^2}$ \cite{BraGPG}.

From \cite{LofM19} we know that when $n=2$ the space $\caV$ is a disjoint union of subsets $\caV(d_1,d_2)$, corresponding 
to $PU(2,1)$-Higgs bundles for which $V$ and $\Phi$ are determined by an exact sequence \cite{LofM19} 
\begin{equation}\label{eq:exactV}
0\to K^{-1}(D_1) \stackrel{\Phi_1}{\to} V \stackrel{\Phi_2}{\to} K(-D_2)\to 0,
\end{equation}
where $d_j=\deg(D_j)$ satisfy the inequalities \eqref{eq:d1d2}. There is a one-to-one correspondence between Higgs bundles of this
type and data $(c,D_1,D_2,\xi)$ where $\xi$ is the extension class of this extension: this was used in \cite{LofM19} to
give each $\caV(d_1,d_2)$ the structure of a complex manifold of dimension $5(g-1)$. 

Finally, we will want to consider the strictly larger space $\oM(\Sigma,\CH^2)$ which includes additionally the 
branched minimal immersions. Under $\caF'$ it maps to the locus $\tr(\Phi^2)=0$ excluding the Higgs bundles with $\Phi=0$.
We will write $\bar\caV$ and $\bar\caW$ when we extend $\caV$ and $\caW$ to 
include branched minimal immersions of the respective types. The total space is disconnected into Toledo invariant 
pieces $\oM(\Sigma,\CH^2)_\tau$ and for $\tau\neq 0$ each of these is connected. This follows from the fact that for
$\tau\neq 0$ each fibre over $c\in\caT_g$ agrees with the component $\caN^c_\tau$ of the nilpotent cone in Toledo invariant 
$\tau$, and these are connected by the  downwards Morse flow. For $\tau=0$ this fibre is $\caN^c_0$ without the locus $\Phi=0$. Since
this locus is the critical manifold of minima we can no longer use the Morse flow argument, and an understanding of
the connected components would require a different approach.

\section{Smooth points of $\caM(\Sigma,N)$.}

For $N=\CH^n$ or $\RH^n$ let $\caV\subset \caM(\Sigma,N)$ denote the subvariety of points where neither $\Phi_j$ is
identically zero. For $\RH^n$ this is the whole space while for $\CH^n$ it consists of 
the equivariant minimal surfaces which are not $\pm$-holomorphic. 
Because we assume $\rho$ is indecomposable the there are no singular points of $\caT_g\times \caR(\pi_1,G)$ in $\caV$.

Recall that $\caC_g$ denotes the universal Teichm\"{u}ller curve over $\caT_g$.  If we denote by $H^0(\caC_g,K^2)$ the 
bundle over $\caT_g$ with fibre $H^0(\Sigma_c,K^2)$ then the map
\[
\caI:\caH(\caC_g,G)\to H^0(\caC_g,K^2);\quad (c,E,\Phi)\mapsto (c,\tr(\Phi^2)),
\]
is complex analytic. 
We will denote  its zero locus (i.e., the preimage of the zero section of $H^0(\caC_g,K^2)$) by $\caN$: for real rank one groups 
like $PU(n,1)$ and $SO_0(n,1)$ this is the union $\cup_{c\in\caT_g}\caN^c$ of all nilpotent cones. 

Let $\caN^o\subset\caN$ denote the open subvariety for which $\Phi$ vanishes nowhere and neither $\Phi_j$ is identically zero. 
Then $\caN^o$ is the image of $\caV$ in $\caH(\caC_g,G)$ and Theorem \ref{thm:smooth} is the statement that $\caN^o$ is smooth. 
We will prove this by showing that points of $\caN^o$ are regular points of $\caI$.  Since $\caI$ 
maps $\caH(\Sigma_c,G)$ to $H^0(\Sigma_c,K^2)$ it suffices the show that each point of $(\caN^c)^o=\caN^o\cap\caN^c$ 
is a regular point of $\caI_c$, the restriction of $\caI$ to the fibre over $c$.
We do this by generalising an argument used by Hitchin for $SL(2,\C)$-Higgs bundles \cite{Hit19}
to compute the rank of $d\caI_c$. 
For this we must recall the hypercohomology description of the tangent space to $\caH(\Sigma_c,G)$ \cite{BisR}. 

Let $P\to\Sigma_c$ denote the 
principal $H^\C$-bundle whose associated bundle is $E$, and let $P(\fh^\C)$, $\caP(\fm^\C)$ denote the holomorphic 
$\fh^\C$ and $\fm^C$ bundles associated by the adjoint action of $H^\C$ on these subspaces of $\fg^\C$. 
One knows from \cite{BisR} that when $(E,\Phi)$
is stable the tangent space at this point is isomorphic to the first hypercohomology $\H^1(\caA^*)$ of the complex
\begin{equation}
\caA^0\stackrel{\ad\Phi}{\to}\caA^1\to 0\to\ldots
\end{equation}
where $\caA^0$ is the sheaf of local sections of $P(\fh^\C)$ and $\caA^1$ is the sheaf of local sections of
$P(\fm^\C)\otimes K$. As Hitchin \cite{Hit19} points out, by considering the ``second'' spectral sequence for the hypercohomology of
$\caA^*$ (i.e., the spectral sequence for the filtration by the degree of Cech cochains) one obtains an
isomorphism for $\H^1(\caA^*)$ involving the kernel and cokernel sheaves
\begin{equation}\label{eq:KerCoker}
0\to \KER \to \caA^0\stackrel{\ad\Phi}{\to}\caA^1\to\COKER\to 0,
\end{equation}
namely,
\begin{equation}\label{eq:H1}
\H^1(\caA^*) \simeq H^1(\KER)\oplus H^0(\COKER).
\end{equation}
Now let $B$ denote the Killing form on $\fg^\C$, specifically $B(\eta,\xi) = \tr(\ad\eta\ad\xi)$. 
By adjoint invariance
$B(\Phi,\ad\Phi(\eta))=0$ for all $\eta$, so that we have a well defined map of sheaves
\begin{equation}\label{eq:B}
B(\Phi,\cdot):\COKER\to K^2;\quad \eta\mapsto B(\Phi,\eta),
\end{equation}
and an induced map from $H^0(\COKER)$ to $H^0(K^2)$.
When pre-composed with the projection of $\H^1(\caA^*)$ onto $H^0(\COKER)$ from \eqref{eq:H1}, at a stable
point $(E,\Phi)$, this induced map agrees with $d\caI_c$ \cite[Remark 2.8(iv)]{BisR}. 
Hence the rank of $d\caI_c$ at $(E,\Phi)$ equals the rank of $B(\Phi,\cdot)$ on $H^0(\COKER)$. 
\begin{lem}\label{lem:rank}
Suppose $\ad\Phi$ has corank $1$, i.e., $\COKER$ is a rank $1$ sheaf. Then $d\caI_c$ has maximal rank if and only if 
$\Phi$ has no zeroes. 
\end{lem}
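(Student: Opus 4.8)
The plan is to analyse the sheaf morphism $B(\Phi,\cdot)$ directly and reduce the maximality of the rank of $d\caI_c$ to a pointwise nonvanishing statement. Since $\Sigma_c$ has genus $g\geq 2$, Riemann--Roch gives $h^0(K^2)=3(g-1)$, so ``maximal rank'' means that $d\caI_c$ surjects onto $H^0(K^2)$; by the discussion preceding the lemma this is equivalent to surjectivity of the induced map $H^0(\COKER)\to H^0(K^2)$, $\eta\mapsto B(\Phi,\eta)$. On the smooth curve $\Sigma_c$ every coherent sheaf splits as $\COKER\cong\caL\oplus T$, with $\caL$ a line bundle (the torsion-free quotient, which is locally free) and $T$ torsion. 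Because $K^2$ is torsion-free the morphism $B(\Phi,\cdot)$ annihilates $T$ and factors through a morphism of line bundles $\beta\colon\caL\to K^2$. Hence $\rk d\caI_c=\rk\bigl(\beta_*\colon H^0(\caL)\to H^0(K^2)\bigr)$, and everything comes down to locating the zeros of $\beta$, viewed as a section of $\caL^{-1}K^2$.

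First I would compute $\beta$ fibrewise. Restriction to the fibre over a point $p$ is right exact, so applying $-\otimes k(p)$ to $\caA^0\xrightarrow{\ad\Phi}\caA^1\to\COKER\to 0$ yields $\COKER|_p=\coker\bigl(\ad\Phi_p\colon\fh^\C|_p\to\fm^\C|_p\otimes K|_p\bigr)$, where $\ad\Phi_p$ is the fibrewise bracket with $\Phi(p)$. The fibre of $\beta$ is then induced by $B(\Phi(p),\cdot)\colon\fm^\C|_p\otimes K|_p\to K^2|_p$, which kills $\im\ad\Phi_p$ by the adjoint invariance noted above. This descended functional is nonzero precisely when $B(\Phi(p),\cdot)$ does not vanish identically on $\fm^\C$. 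Here I would invoke that for the symmetric pair the Killing form restricts nondegenerately to $\fm^\C$ (since $\fh^\C\perp\fm^\C$ and $B$ is nondegenerate on $\fg^\C$), which forces $B(\Phi(p),\cdot)\equiv 0$ iff $\Phi(p)=0$. As $\beta$ vanishes on the torsion summand $T$, we get $\beta|_p\neq 0$ iff the fibre map is nonzero iff $\Phi(p)\neq 0$. Consequently the zero divisor $Z$ of $\beta$ is supported exactly on the zeros of $\Phi$.

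With this in hand both directions are immediate. If $\Phi$ has no zeros then $Z=0$, so $\beta\colon\caL\to K^2$ is a nowhere-vanishing morphism of line bundles, hence an isomorphism; then $\beta_*\colon H^0(\caL)\to H^0(K^2)$ is an isomorphism and $d\caI_c$ has maximal rank. Conversely, if $\Phi$ vanishes somewhere then $\deg Z>0$ and $\beta$ factors as $\caL\xrightarrow{\ \sim\ }K^2(-Z)\hookrightarrow K^2$, so $\im\beta_*\subseteq H^0(K^2(-Z))$. Since $\deg K^2=4g-4\geq 2g$ for $g\geq 2$, the bundle $K^2$ is base-point free, whence $h^0(K^2(-Z))<h^0(K^2)$ and $d\caI_c$ fails to surject.

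The corank $1$ hypothesis is exactly what makes this clean: it guarantees that $\caL$ and $K^2$ are both line bundles, so a nowhere-zero morphism between them must be an isomorphism rather than merely injective. The step I expect to require the most care is the fibrewise identification of $\beta$: one must confirm that passing to the fibre of the cokernel genuinely recovers the pointwise cokernel of $\ad\Phi_p$ (right exactness of $-\otimes k(p)$), and keep careful track of the torsion $T$ of $\COKER$, which jumps precisely over the zeros of $\Phi$ and accounts for the gap between the generic rank $1$ and the larger pointwise cokernel there. The nondegeneracy of $B|_{\fm^\C}$ is the other input to record, but it is a standard feature of the Cartan decomposition \eqref{eq:h+m}.
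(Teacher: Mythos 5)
Your proposal is correct and takes essentially the same approach as the paper: both reduce maximality of $\rk d\caI_c$ to showing that the induced sheaf map $B(\Phi,\cdot):\COKER\to K^2$ vanishes exactly on the zero locus of $\Phi$, so that the image of $H^0(\COKER)\to H^0(K^2)$ lies in $H^0(K^2(-D))$ and full rank holds if and only if $D=0$. The paper simply asserts the vanishing statement and writes $\COKER\simeq K^2(-D)$ outright; your fibrewise identification via right exactness, the nondegeneracy of $B$ on $\fm^\C$, the explicit splitting off of the torsion subsheaf, and the base-point-freeness of $K^2$ are precisely the details it leaves implicit.
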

\begin{proof}
When $\COKER$ is rank one $B(\Phi,\cdot)$ vanishes precisely at the zeroes of $\Phi$ with the same divisor of zeroes, $D$, 
and hence $\COKER\simeq K^2(-D)$. Thus the
induced map $H^0(\COKER)\to H^0(K^2)$ is maximal rank, and hence $d\caI$ has maximal rank, if and only if $\Phi$ has no zeroes. 
\end{proof}
Now our aim is to show that $\ad\Phi$ has corank $1$ for points of $\caV$. First we do the case $N=\CH^n$.

\begin{lem}
For $G=PU(n,1)$, when neither $\Phi_j$ is identically zero the map $\ad\Phi$ has corank one. 
\end{lem}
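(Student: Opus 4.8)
The plan is to reduce the statement to a single pointwise computation in linear algebra. Since the rank of the cokernel sheaf $\COKER$ equals the generic corank of the bundle map $\ad\Phi\colon\caA^0\to\caA^1$, and since neither $\Phi_1$ nor $\Phi_2$ vanishes identically, it is enough to work in the fibre over a point $p$ at which $u:=\Phi_1(p)$ and $v:=\Phi_2(p)$ are both nonzero. Using the block decomposition \eqref{eq:h+m} I would identify $\fh^\C$ with $\gl(n,\C)$ via $B\mapsto\diag(B,-\tr B)$ and $\fm^\C$ with $\C^n\oplus(\C^n)^*$, and after trivialising $K_p$ write $\Phi(p)$ as the off-diagonal matrix with blocks $u$ and $v^T$, where $v\in\C^n$. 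A direct bracket computation then gives
\[
\ad\Phi(B)=\bigl((bI-B)u,\ v^T(B-bI)\bigr),\qquad b=-\tr B,
\]
a linear map $\gl(n,\C)\to\C^n\oplus(\C^n)^*$, and the task becomes showing that its corank is exactly one whenever $u,v\neq0$.

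That the corank is at least one is immediate from the identity $B(\Phi,\ad\Phi(\eta))=0$: the image of $\ad\Phi$ lies in the kernel of the functional $B(\Phi,\cdot)$ on $\fm^\C$, which is a genuine hyperplane since $\Phi(p)\neq0$. The real content is therefore the reverse inequality $\rk\ad\Phi\geq 2n-1$, equivalently $\dim\ker\ad\Phi\leq(n-1)^2$. This kernel is the centraliser of $\Phi(p)$ in $\fh^\C$: the set of $B$ with $Bu=bu$, $v^TB=bv^T$ and $b=-\tr B$, i.e.\ those $B$ for which $u$ is a right eigenvector and $v$ a left eigenvector sharing the eigenvalue $-\tr B$.

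I would compute this dimension in two steps. First, treating $b$ as a free scalar, the $2n$ conditions $Bu=bu$ and $v^TB=bv^T$ on $(B,b)\in\gl(n,\C)\oplus\C$ carry exactly a one-parameter family of linear relations whenever $u,v\neq0$: the coefficient equations $p_iu_j+q_jv_i=0$ force $p=sv$ and $q=-su$, after which the relation on the $b$-coefficient holds automatically. Hence these conditions have rank $2n-1$ and define a subspace of dimension $(n-1)^2+1$. Second, imposing $b=-\tr B$ cuts this down by exactly one, the crucial point being that the functional $(B,b)\mapsto b+\tr B$ is not a combination of the previous $2n$ conditions (summing the would-be diagonal contributions would force $n=-1$). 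This yields $\dim\ker\ad\Phi=(n-1)^2$ and hence corank one. The main obstacle is precisely this bookkeeping of dependencies: one must verify both that no special configuration of nonzero $u,v$ lowers the rank of the eigenvector conditions below $2n-1$, and that the trace constraint stays independent of them. Both reduce to the short coefficient computations indicated, which hold uniformly for all $u,v\neq0$, so the corank is identically one on the open set where $\Phi_1,\Phi_2$ are both nonzero, and therefore $\COKER$ has rank one.
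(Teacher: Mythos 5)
Your proposal is correct and takes essentially the same approach as the paper: both reduce the statement to the pointwise bracket computation with $\chi=\diag(B,-\tr B)\in\fh^\C$, yielding the kernel conditions $Bu=bu$, $v^TB=bv^T$, $b=-\tr B$, and then count the rank of this linear system to get $2n-1=\dim\fm^\C-1$. The only difference is bookkeeping: the paper chooses a local holomorphic frame adapted to the exact sequence \eqref{eq:extV}, so that $u$ and $v$ become standard basis vectors and the $2n$ equations visibly contain exactly one repetition ($A_{n1}=0$), whereas you keep $u,v$ arbitrary and obtain the same count by classifying the linear relations among the eigenvector equations and checking that the trace constraint is independent of them (your Killing-form argument for corank at least one is correct but redundant, since the kernel computation already gives the rank exactly).
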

\begin{proof}
Following \cite{LofM19}, since $\tr(\Phi^2)=0$ we have $\Phi_2\circ\Phi_1=0$, and so we have a sequence
\begin{equation}\label{eq:extV}
0\to K^{-1}(D_1)\stackrel{\Phi_1}{\to} V\stackrel{\Phi_2}{\to} K(-D_2)\to 0,
\end{equation}
where $D_j$ is the divisor of zeroes of $\Phi_j$. About each point $p\in\Sigma$ we can choose a local holomorphic chart
$(U,z)$ and a local holomorphic frame $\sigma_1,\ldots,\sigma_n$ for $V$ over $U$ satisfying
\[
\Phi(\sigma_0) = z^k\sigma_1dz, \quad \Phi(\sigma_n) = z^ldz,\quad \Phi(\sigma_j)=0\ \text{for}\ j\neq n.
\]
Here $k,l$ are, respectively, the degrees of $D_1$ and $D_2$ at the point $p$ and $\sigma_0$ denotes a local trivialising
section of the trivial bundle. With respect to this frame $\Phi$ is represented as a local section of $\End(E)\otimes
K$ by
\begin{equation}\label{eq:xi}
\xi=\begin{pmatrix} 0&\ldots &0& z^k\\ 0&\ddots &&0\\ \vdots & && \vdots \\ 0&\ldots &z^l&0\end{pmatrix}dz=
\begin{pmatrix} 0&z^ke_1\\ z^le_n^t&0\end{pmatrix}dz,
\end{equation}
where in the second expression the standard basis vectors $e_1,\ldots,e_n$ for $\C^n$ have been used to write the matrix
in block form.
Let $\chi$ be a local holomorphic section of $P(\fh^\C)$, identified with a locally holomorphic $\fh^\C$-valued function. 
We compute
\begin{equation}\label{eq:bracket}
[\xi,\chi] = \left[ \begin{pmatrix} 0&z^ke_1\\ z^le_n^t&0\end{pmatrix}dz, \begin{pmatrix} A&0\\0&a\end{pmatrix}\right]
=\begin{pmatrix} 0 & z^k(aI_n-A)e_1\\ z^le_n^t(A-aI_n) &0\end{pmatrix}dz,
\end{equation}
$a=-\tr(A)$. Hence $[\xi,\chi]=0$ holds over $U$ when the $2n$ equations 
\[
(A+\tr(A))e_1=0,\quad (A^t+\tr(A))e_n=0, 
\]
hold. In components these equations are
\[
2A_{11}+\sum_{i=2}^nA_{ii}=0,\ A_{j1}=0,\ A_{nk}=0,\ 2A_{nn}+\sum_{i=1}^{n-1}A_{ii}=0,
\]
for $2\leq j\leq n$ and $1\leq k\leq n-1$. We see that this gives $2n-1$ independent equations since $A_{n1}=0$ appears
twice. Hence $\ad\xi$ has rank $2n-1=\dim\fm^\C-1$.
\end{proof}
Now we deal with the case $N=\RH^n$ and follow the description of the Higgs bundles for this case given in \cite{LofM18}. 
These have the form 
$E=V\oplus 1$ where $V$ has $\det(V)=1$ and carries a non-degenerate quadratic form $Q_V$. In this case $\Phi_2=\Phi_1^\top$ 
where the transpose is with respect to $Q_V$. It follows that for $f$ to be an immersion both $\Phi_j$ vanish nowhere so both divisors 
$D_1,D_2$ are trivial.
As a smooth bundle $V$ splits into a direct sum $K^{-1}\oplus W\oplus K$ and with respect to this splitting we can write
\begin{equation}\label{eq:Q_V}
Q_V = \begin{pmatrix} 0 & 0 & 1\\ 0& Q_W&0\\ 1 &0 & 0\end{pmatrix}.
\end{equation}
Then $\Phi$ is represented by $\xi$ in \eqref{eq:xi} with $k=l=0$. For the Lie algebra we use
\[
\so(Q,1) = \left\{ \begin{pmatrix} A& u\\ v^t & 0\end{pmatrix}: A^tQ+QA=0,\ Qu=v\right\}\simeq \so(n+1,\C),
\]
where $A\in\End(\C^n)$, $u,v\in\C^n$, $Q$ is \eqref{eq:Q_V} with $Q_W = I_{n-2}$, and the transpose is the usual matrix
transpose. The appropriate (complexified) symmetric space decomposition is obtained from the 
Cartan decomposition \eqref{eq:h+m} above:
\[
\so(Q,1) = \fh_Q^\C\oplus \fm_Q^\C,\quad \fh_Q^\C=\so(Q,1)\cap\fh^\C,\ \fm_Q^\C = \so(Q,1)\cap\fm^\C.
\]
\begin{lem}
For $G=SO_0(n,1)$ the map $\ad\Phi$ has corank one. 
\end{lem}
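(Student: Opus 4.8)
The plan is to mirror the $PU(n,1)$ computation, using that for an immersion both $\Phi_j$ are nowhere zero, so we may take the local model $\xi$ of \eqref{eq:xi} with $k=l=0$, namely $\xi=\bigl(\begin{smallmatrix}0&e_1\\ e_n^t&0\end{smallmatrix}\bigr)dz$. First I would record the two relevant spaces for this $Q$. The space $\fh_Q^\C=\so(Q,1)\cap\fh^\C$ consists of the blocks $\bigl(\begin{smallmatrix}A&0\\ 0&0\end{smallmatrix}\bigr)$ with $A^tQ+QA=0$, so it is exactly $\so(Q)\cong\so(n,\C)$; and $\fm_Q^\C=\so(Q,1)\cap\fm^\C$ consists of the blocks $\bigl(\begin{smallmatrix}0&u\\ (Qu)^t&0\end{smallmatrix}\bigr)$ with $u\in\C^n$ arbitrary, so $\fm_Q^\C\cong\C^n$ via $u$. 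Since $\ad\Phi$ sends sections of $P(\fh_Q^\C)$ to sections of $P(\fm_Q^\C)\otimes K$, to prove it has corank one it suffices to show the pointwise map $\ad\xi\colon\fh_Q^\C\to\fm_Q^\C$ has corank one.

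Next I would compute the bracket: for $\chi=\bigl(\begin{smallmatrix}A&0\\ 0&0\end{smallmatrix}\bigr)$ with $A\in\so(Q)$ one finds $[\xi,\chi]=\bigl(\begin{smallmatrix}0&-Ae_1\\ e_n^tA&0\end{smallmatrix}\bigr)dz$. This does lie in $\fm_Q^\C$: because $Qe_1=e_n$ and $A^tQ=-QA$, the bottom-left block equals $\bigl(Q(-Ae_1)\bigr)^t$, as the model for $\fm_Q^\C$ demands. Thus, under $\fm_Q^\C\cong\C^n$, the map $\ad\xi$ is, up to sign, the evaluation $A\mapsto Ae_1$ from $\so(Q)$ to $\C^n$, and its corank equals $n-\dim\{Ae_1:A\in\so(Q)\}$.

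The crux, which I expect to be the main obstacle, is to compute the dimension of this image, the difficulty being that $e_1$ is a $Q$-null vector ($e_1^tQe_1=e_1^te_n=0$). I would show the image is precisely the $Q$-orthogonal complement $e_1^{\perp_Q}=\{u:u_n=0\}$, of dimension $n-1$. The inclusion into $e_1^{\perp_Q}$ is immediate from skew-symmetry, since $e_1^tA^tQe_1=-e_1^tQAe_1$ forces $\langle Ae_1,e_1\rangle_Q=0$. For the reverse I would identify $\{Ae_1:A\in\so(Q)\}$ with the tangent space at $e_1$ to its $SO(Q)$-orbit; for $n\geq3$ that orbit is open in the null cone (irreducible of dimension $n-1$), whose tangent space at the smooth point $e_1$ is all of $e_1^{\perp_Q}$, so equality holds, while the remaining case $n=2$ is checked directly. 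Hence the image has dimension $n-1$ and $\ad\xi$ has corank $n-(n-1)=1$, which gives the claim.
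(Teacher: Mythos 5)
Your proof is correct and follows essentially the same route as the paper: both reduce the claim to showing the pointwise map $\ad\xi\colon\fh_Q^\C\to\fm_Q^\C$ has corank one, compute the same bracket $[\xi,\chi]=\bigl(\begin{smallmatrix}0&-Ae_1\\ e_n^tA&0\end{smallmatrix}\bigr)dz$, and identify the image with the hyperplane $\{u\in\C^n:u_n=0\}$ inside $\fm_Q^\C\simeq\C^n$. The one point of divergence is that the paper merely asserts that $A_{n1}=0$ is the only constraint on $Ae_1$, whereas you prove surjectivity onto $e_1^{\perp_Q}$ via the $SO(Q)$-orbit of the null vector $e_1$; this is a legitimate way to fill that gap, though it also follows by pure linear algebra (writing $A=QS$ with $S$ skew, one gets $\{Ae_1:A\in\so(Q)\}=Q\{Se_1:S^t=-S\}=Q\{v:v_1=0\}=\{u:u_n=0\}$), which avoids the case split at $n=2$.
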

\begin{proof}
Following the same arguments as for $\CH^n$ above but applied to $\so(Q,1)$ it suffices to show that the map
$\ad\xi:\fh_Q^\C\to\fm_Q^\C$ has corank $1$. Using the equations \eqref{eq:bracket} but with $\chi\in\fm_Q^\C$ we 
we see that the image of $\ad\xi$ is isomorphic to
\[
\{Ae_1:A^t=-QAQ\}\subset \C^n.
\]
The symmetry $A^t=-AQA$ implies that $A_{n1}=0$ and this is the only condition $Ae_1$ must satisfy. Therefore $\im(\ad\xi)$
has codimension one in $\fm_Q^\C$.
\end{proof}
To complete the proof of Theorem \ref{thm:smooth} we note that at regular points the dimension of $\caI^{-1}(0)$ is
\begin{eqnarray*}
\dim\caH(\caC_g,G)-\dim H^0(\Sigma_c,K^2) & =  &\dim(\caT_g) + \dim(\caR(\pi_1\Sigma,G))-\dim H^0(\Sigma_c,K^2)\\
&=& \dim(\caR(\pi_1\Sigma,G).
\end{eqnarray*}
Here $\dim(\caR(\pi_1\Sigma,G))$ means the dimension of the smooth open subvariety of indecomposable representations. 
This has real dimension $(2g-2)\dim(G)$.
\begin{rem}
Theorem \ref{thm:smooth} fills a small gap in the proof of Theorem 4.4 of \cite{LofM18}, which gives a smooth parametrisation of
the connected components of $\caM(\Sigma,\RH^4)$. The proof that the parametrisation is a local diffeomorphism needs the fact 
the $\caM(\Sigma,\RH^4)$ is non-singular.
\end{rem}

\section{Limit points of the $\Ct$-action for $n=2$.}

It is well-known that $\Ct$ acts on $G$-Higgs bundles by $t\cdot(E,\Phi)=(E,t\Phi)$, $t\in\Ct$, and that the fixed points
of this action are Hodge bundles, i.e., one can write $E=\oplus_{j=1}^m E_j$ so that $\Phi:E_j\to E_{j+1}\otimes K$ with
$E_{m+1}=0$. The integer $m$ is called the \emph{length} of the Hodge bundle. It is also well-known that these fixed points 
are exactly the critical points of the Higgs field energy $\fE(E,\Phi)=\|\Phi\|^2_{L^2}$. Given one
of the connected components $C$ of $\Ct$-fixed points define
\[
\caS_C =\{(E,\Phi): \lim_{t\to 0} (E,t\Phi)\in C\}, \quad
\caU_C =\{(E,\Phi): \lim_{t\to \infty} (E,t\Phi)\in C\}.
\]
By a theorem of Kirwan these agree with, respectively, the stable and
unstable manifolds of $C$ along the downwards Morse flow (gradient flow of $-\grad\fE$). 
One knows that $\cup_C \caS_C$ equals $\caH(\Sigma_c,G)$ while $\cup_C\caU_C$ equals the nilpotent cone $\caN^c$.

Our aim in this section is to identify the limit points as $t\to\infty$ and $t\to 0$ inside $\oM(\Sigma,\CH^2)$. 

First we note that if $\Phi_j=0$ for either $j=1$ or $j=2$, the pair $(E,\Phi)$ is plainly a Hodge bundle of length $2$, hence a $\Ct$-fixed
point.  So our interest is when neither $\Phi_j$ vanishes identically.
In this case, using \eqref{eq:exactV}, as a $C^\infty$-bundle
\begin{equation}\label{eq:Vinfty}
V\simeq_{C^\infty} K^{-1}(D_1)\oplus K(-D_2).
\end{equation}
The $\bar\partial$-operator for the holomorphic structure of $V$ can be written with respect to this smooth splitting as
\begin{equation}\label{eq:dbarV}
\bar\partial_V = \begin{pmatrix} \bar\partial_1 & \beta\\ 0&\bar\partial_2\end{pmatrix},
\end{equation}
where $\beta$ is a smooth $(0,1)$-form taking values in $\Hom(K(-D_2),K^{-1}(D_1))$): its Dolbeault cohomology
class $[\beta]$ corresponds to the extension class in $H^1(K^{-2}(D_1+D_2))$ which determines the extension \eqref{eq:exactV}.
When $[\beta]=0$ it is easy to check that this gives a Hodge bundle \cite{LofM19}, which 
we will denote by $(E^\infty,\Phi^\infty)$. Here $E^\infty = V^\infty\oplus 1$ where $V^\infty$ is the holomorphically 
split bundle \eqref{eq:Vinfty} and 
\[
\Phi^\infty_1:K^{-1}(D_1)\to V^\infty,\quad \Phi^\infty_2: V^\infty\to K(-D_2),
\]
are the natural inclusion and projection respectively. Notice that $\Phi^\infty=\Phi$ as a map on smooth bundles.
\begin{lem}
For $(E,\Phi)\in\caV$, $\lim_{t\to \infty} (E,t\Phi) = (E^\infty,\Phi^\infty)$.
\end{lem}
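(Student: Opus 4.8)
The plan is to realise the $t\to\infty$ limit as the outcome of a one-parameter gauge transformation that rescales $\Phi$ and simultaneously kills the off-diagonal term $\beta$ in \eqref{eq:dbarV}. The key observation is that the $\Ct$-action $(E,\Phi)\mapsto(E,t\Phi)$ is, up to complex gauge equivalence, the same as fixing $\Phi$ and rescaling the holomorphic structure by an automorphism of the $C^\infty$-bundle. Concretely, on the smooth splitting \eqref{eq:Vinfty} I would introduce the gauge transformation $g_s=\diag(s,1,s^{-1})$ acting on $E=K^{-1}(D_1)\oplus K(-D_2)\oplus 1$ (with $s\in\Ct$), chosen so that conjugating $\Phi$ by $g_s$ multiplies each component $\Phi_1,\Phi_2$ by a common factor, matching the effect of $t\cdot(E,\Phi)$ for an appropriate relation between $s$ and $t$.

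First I would record how $g_s$ acts on the data: conjugation sends $\Phi\mapsto g_s\Phi g_s^{-1}$, which scales $\Phi$ by the desired power of $s$, while it transforms $\bar\partial_V$ in \eqref{eq:dbarV} by sending the extension form $\beta$ to $s^{2}\beta$ (the exact power depending on the weights, but in any case a \emph{positive} power of $s$). Thus applying $g_s$ produces a Higgs pair holomorphically isomorphic to $(E,t\Phi)$ for suitable $t=t(s)$, whose holomorphic structure has off-diagonal term $s^{2}\beta$. Second, I would take $s\to 0$ (equivalently $t\to\infty$): the diagonal entries $\bar\partial_1,\bar\partial_2$ are unchanged, and $s^{2}\beta\to 0$ in the appropriate Sobolev or $C^\infty$ topology. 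In the limit $\beta=0$, so $\bar\partial_V$ becomes block-diagonal and $V$ is holomorphically the split bundle $V^\infty$; the pair is then precisely the Hodge bundle $(E^\infty,\Phi^\infty)$ described above, since $\Phi^\infty=\Phi$ as a smooth map and the inclusion/projection structure is exactly what the diagonalised $\bar\partial$-operator prescribes.

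The main obstacle is not the formal scaling computation but justifying that this complex-gauge limit computes the actual limit point in the moduli space $\caH(\Sigma_c,G)$, i.e.\ the limit along the Morse flow, rather than merely a limit of complex gauge orbits. The standard resolution, which I would invoke, is the theorem (due to Simpson, and used throughout this circle of ideas) that every point of the nilpotent cone flows under $t\to\infty$ to its associated graded Hodge bundle, and that this graded object is exactly obtained by degenerating the holomorphic structure to its $\Ct$-fixed split form while retaining $\Phi$. Since $(E^\infty,\Phi^\infty)$ is manifestly $\Ct$-fixed (it is a length-two Hodge bundle) and is the associated graded of the filtration $K^{-1}(D_1)\subset V$ given by $\Phi_1$ in \eqref{eq:exactV}, it must be the flow limit. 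I would therefore structure the argument as: (a) identify $(E^\infty,\Phi^\infty)$ as the graded object of the Hodge filtration of $(E,\Phi)$; (b) note it is a $\Ct$-fixed point, hence lies in some critical component $C$; (c) appeal to the general principle that the $t\to\infty$ limit of $(E,t\Phi)$ in the moduli space is this graded Hodge bundle. The one point demanding care is stability/polystability of $(E^\infty,\Phi^\infty)$ so that it represents a genuine point of $\caH(\Sigma_c,G)$ and the limit is well-defined; this follows because the graded object of a (poly)stable Higgs bundle under a $\Ct$-invariant filtration is again polystable, so no separate verification beyond citing the filtration \eqref{eq:exactV} is needed.
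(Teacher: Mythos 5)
Your core idea --- implement the $\Ct$-action by a constant diagonal gauge transformation so that the scaling is transferred from $\Phi$ to the extension form $\beta$ of \eqref{eq:dbarV}, then let that form tend to zero --- is exactly the paper's argument. However, two of your steps fail as written, and the second is a genuine gap.

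First, the matrix. On $K^{-1}(D_1)\oplus K(-D_2)\oplus 1$ a diagonal gauge $g=\diag(a,b,c)$ scales $\Phi_1$ (the $(1,3)$ block) by $a/c$, $\Phi_2$ (the $(3,2)$ block) by $c/b$, and $\beta$ (the $(1,2)$ block) by $a/b$. Your $g_s=\diag(s,1,s^{-1})$ therefore sends $(\Phi_1,\Phi_2)$ to $(s^2\Phi_1,\,s^{-1}\Phi_2)$: there is no common factor, so conjugation by $g_s$ does not realise the $\Ct$-action, contrary to your assertion. Demanding a common factor forces $a/c=c/b=t$, e.g.\ the paper's $g_t=\diag(t,t^{-1},1)$, and then automatically $\beta\mapsto(a/b)\beta=t^{2}\beta$; equivalently $(E,t\Phi)\simeq(E^t,\Phi)$ where $E^t$ carries the extension form $t^{-2}\beta$, which splits in the limit $t\to\infty$. (Your $g_s$ could be salvaged by composing with the scalar gauge $\diag(\mu,\mu,1)$ with $\mu=s^{-3/2}$, but you never invoke this.)

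Second, the stability of the limit. You correctly flag that the real issue is whether the limit of gauge representatives computes the limit in $\caH(\Sigma_c,G)$, but your resolution fails. The principle you invoke --- that the graded object of a (poly)stable Higgs bundle along a $\Ct$-invariant filtration is automatically polystable, ``so no separate verification is needed'' --- is false in general: gradeds of stable Higgs bundles along invariant filtrations are typically destabilised by the quotient pieces, and whether such a graded limit is stable, strictly polystable or unstable depends on degree inequalities (compare the trichotomy of Proposition \ref{prop:limit0} for the $t\to 0$ limits). Likewise, the theorem you attribute to Simpson concerns $t\to 0$ limits and iterated Harder--Narasimhan filtrations; for $t\to\infty$ the identification of the limiting Hodge bundle is precisely what this lemma proves, so invoking it as known is circular. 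The verification you wave away is the one thing that must be done: the $\Phi^\infty$-invariant subbundles of $E^\infty$ are $K^{-1}(D_1)$ and $K^{-1}(D_1)\oplus 1$, and their slope conditions are exactly the inequalities \eqref{eq:d1d2}, which hold on $\caV$ by \cite{LofM19}; hence $(E^\infty,\Phi^\infty)$ is a stable Higgs bundle, and convergence of the stable pairs $(E^t,\Phi)\to(E^\infty,\Phi^\infty)$ then yields convergence in the moduli space. A final small slip: $(E^\infty,\Phi^\infty)$ is a length-\emph{three} Hodge bundle, not length two; on $\caV$ neither $\Phi_j$ vanishes identically, and the length-two Hodge bundles are exactly the minima constituting $\caW_\tau$.
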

The proof is essentially the same as the proof of Prop.\ 4.9 in \cite{LofM18}. 
\begin{proof}
We perform a gauge transformation. Set
\begin{equation}
g_t=\begin{pmatrix} t&0&0\\ 0&t^{-1}&0\\ 0&0&1\end{pmatrix}.
\end{equation}
Then $g_t\Phi g_t^{-1}=t\Phi$ while
\begin{equation}\label{eq:t}
g_t^{-1}\bar\partial_Eg_t = \begin{pmatrix} 
\bar\partial_1 &  t^{-2}\beta &0\\ 0&\bar\partial_2&0\\ 0&0&0&\bar\partial\end{pmatrix}.
\end{equation}
Let $E^t$ denote the holomorphic bundle for this $\bar\partial$-operator. Then $(E,t\Phi)\simeq (E^t,\Phi)$ hence 
\[
\lim_{t\to\infty}(E,t\Phi) = \lim_{t\to\infty} (E^t,\Phi) = (E^\infty,\Phi^\infty).
\]
\end{proof}
Recall that in \cite{LofM19} the data
$(c,D_1,D_2,[\beta])$ is used to parametrise the component $\caV(d_1,d_2)$ by fixing the
degrees of the divisors. Let $\oV(d_1,d_2)$ denote the closure of this in $\oV$, i.e., where $D_1,D_2$ can have points in
common. By \cite{LofM19} this is a complex analytic family over Teichm\"uller space: let $\oV_c(d_1,d_2)$ denote the fibre
over $c\in\caT_g$. The previous lemma shows that, under the map from $\oV_c(d_1,d_2)$ to Higgs bundles over $\Sigma_c$,  
$(c,D_1,D_2,[t^{-2}\beta])$ maps to $(E,t\Phi)$ and therefore the limit as $t\to\infty$ is given by the
trivial extension for $V$. Given that this parametrisation of Higgs bundles is smooth (which is proved in Lemma
\ref{lem:Z} below) we have the following corollary, which was conjectured in \cite{LofM19}.
\begin{cor}\label{cor:U}
$\oV_c(d_1,d_2)$ is diffeomorphic to the unstable manifold $\,\caU_C$ where $C$ is the critical manifold of Hodge bundles for which
$V=K^{-1}(D_1)\oplus K(-D_2)$ (i.e., the trivial extension).
\end{cor}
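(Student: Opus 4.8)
The plan is to exhibit the parametrisation of \cite{LofM19} as the desired diffeomorphism. For fixed $c$ let $\Psi$ denote the map sending the data $(c,D_1,D_2,[\beta])$, with $D_1,D_2$ effective of degrees $d_1,d_2$ and $[\beta]\in H^1(K^{-2}(D_1+D_2))$, to the $PU(2,1)$-Higgs bundle $(E,\Phi)$ built from the extension \eqref{eq:exactV}. The structural observation, already visible in the proof of the previous lemma, is that $\Psi$ is $\Ct$-equivariant: the action $t\cdot[\beta]=[t^{-2}\beta]$ on the fibres of $\oV_c(d_1,d_2)$ corresponds under $\Psi$ to $t\cdot(E,\Phi)=(E,t\Phi)$. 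I would establish the corollary in two stages, first that $\Psi$ is a bijection onto $\caU_C$ and then that it is a diffeomorphism.

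For the bijection, the previous lemma gives $\lim_{t\to\infty}(E,t\Phi)=(E^\infty,\Phi^\infty)$, the trivial extension, which lies in $C$; hence $\Psi$ takes values in $\caU_C$. Injectivity is exactly the one-to-one correspondence between Higgs bundles of type \eqref{eq:exactV} and the data $(c,D_1,D_2,[\beta])$ recorded in \cite{LofM19}. Surjectivity is the substantive point. Given $(E,\Phi)\in\caU_C$, I first observe that neither $\Phi_j$ can vanish identically: were either $\Phi_j=0$, the pair $(E,\Phi)$ would be a length-two Hodge bundle, hence a $\Ct$-fixed point lying in the minimal locus $\caW_\tau$, so its limit under $t\to\infty$ would be itself and could not lie in the non-minimal manifold $C$. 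With both $\Phi_j\neq0$ and $\tr(\Phi^2)=0$ we recover the exact sequence \eqref{eq:exactV}, so $(E,\Phi)$ is of the parametrised type with divisors $D_1',D_2'$ equal to the zero divisors of $\Phi_1,\Phi_2$. Since the flow merely rescales $\Phi$ it leaves these zero divisors unchanged, so $D_1',D_2'$ coincide with the divisors of the limiting Hodge bundle in $C$; as every bundle in $C$ has divisor degrees $(d_1,d_2)$, this forces $\deg D_j'=d_j$ and places $(E,\Phi)$ in the image of $\Psi$.

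It remains to upgrade the bijection to a diffeomorphism. The unstable manifold $\caU_C$ is smooth by Kirwan's theorem, and over the open dense locus of unbranched immersions ($D_1\cap D_2=\emptyset$) Theorem \ref{thm:smooth} identifies $\caU_C$ with the smooth subvariety $\caV(d_1,d_2)$ while Lemma \ref{lem:Z} shows $\Psi$ restricts there to a diffeomorphism; in particular $\dim_{\C}\caU_C=5(g-1)$, the fibre dimension of \cite{LofM19}. It then suffices to check that $\Psi$ remains a local diffeomorphism across the branch locus, and this is where I expect the real difficulty. Along $D_1\cap D_2\neq\emptyset$ the Higgs field $\Phi$ acquires zeroes, so by Lemma \ref{lem:rank} these points fail to be regular points of $\caI_c$ and $\caN^c$ is itself singular there, even though $\caU_C$, as an abstract unstable manifold, is smooth. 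One must therefore verify directly that the differential of $\Psi$ stays an isomorphism onto the tangent space of $\caU_C$ across this locus, equivalently that the smooth structure carried from the parametrisation of \cite{LofM19} agrees with the intrinsic one of the unstable manifold. This is precisely the locus at which the closures of distinct components $\caV(d_1,d_2)$ will later be seen to meet, rendering $\oM(\Sigma,\CH^2)$ singular.
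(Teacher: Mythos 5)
Your overall route is the paper's own: you use the $\Ct$-equivariance of the parametrisation of \cite{LofM19} (with $t$ acting on the extension class as $[\beta]\mapsto[t^{-2}\beta]$), the preceding limit lemma to see that the image lies in $\caU_C$, injectivity of the parametrisation from \cite{LofM19}, and Lemma \ref{lem:Z} for smoothness; your surjectivity argument, which the paper leaves implicit, is correct and a welcome addition. The problem is that you stop exactly where the proof has to finish: you declare that the ``real difficulty'' is to check that the parametrisation remains a local diffeomorphism across the branch locus $D_1\cap D_2\neq\emptyset$, and you leave that as something ``one must verify directly''. As written, the proposal therefore establishes the corollary only over the open locus of unbranched immersions, which is a genuine gap.

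The missing step is, however, precisely what Lemma \ref{lem:Z} already supplies, and here you have misread its scope: the lemma is stated, and proved, for the closure $\oV_c(d_1,d_2)$, not merely for $\caV_c(d_1,d_2)$. Its proof is a local Cech computation at each point $P_j$ of the support of $D_1+D_2$ in which the local degrees $n_j$ of $D_1$ and $m_j$ of $D_2$ are allowed to be simultaneously positive; the decisive comparison of $B=[\Phi,C]$ with the polynomials $w_j,y_j$ (of degrees $<n_j$, $<m_j$) nowhere uses disjointness of the divisors. Your appeal to Lemma \ref{lem:rank} conflates two different issues: the failure of $d\caI_c$ to have maximal rank where $\Phi$ has zeroes means the nilpotent cone $\caN^c$ is not cut out transversally there (indeed such points are later shown to be singular points of $\caN^c$, where closures of different strata meet), but it says nothing against $\caZ$ being an immersion, because $\caZ$ maps into the ambient moduli space $\caH(\Sigma_c,G)$, which is smooth at every stable Higgs bundle, with tangent space $\H^1(\caA^*)$, whether or not $\Phi$ has zeroes. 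Once Lemma \ref{lem:Z} is invoked on all of $\oV_c(d_1,d_2)$, your bijection is an injective holomorphic immersion between smooth manifolds of the same dimension $5(g-1)$ (smoothness and the dimension of $\caU_C$ coming from Kirwan's theorem and Gothen's index computation, as you note), hence a diffeomorphism, and the proof closes.
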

Now we turn to limits as $t\to 0$. For a stable Higgs bundle $(E,\Phi)$ with Toledo invariant $\tau$ with $E=V\oplus 1$ 
we note that $\tau = -\tfrac23\deg(V)$ in our convention. Since
the dual Higgs bundle has opposite Toledo invariant we may assume that $\tau\geq 0$. 
\begin{prop}\label{prop:limit0}
Let $(E,\Phi)$ be a stable Higgs bundle with $\tau\geq 0$ and let $(E^0,\Phi^0)=\lim_{t\to 0}(E,t\Phi)$. Suppose
$(E,\Phi)$ is not a Hodge bundle and therefore is given by a non-trivial extension of the form \eqref{eq:exactV} for effective divisors
$D_1,D_2$. Then exactly one of the following holds:
\begin{enumerate}
\item $E$ is a semistable bundle, in which case $\tau=0$ and $(E^0,\Phi^0)=(Gr(E),0)$, where $Gr(E)$ is
the associated graded bundle arising from a Jordan-H\"older filtration of $E$. 
\item $\tau>0$ and every line subbundle $L$ of $V$ satisfies $\deg(L)<\tfrac13\deg(V)$, in which case $(E^0,\Phi^0)=(E,\Phi_1)$
is a length two Hodge bundle. 
\item $\tau>0$, and the maximal destabilizing line subbundle $L$ of $V$ has $\deg(L)=\tfrac13\deg(V)$. 
In this case there is a positive
divisor $D$ for which $L\simeq K(-D_2-D)$, $V/L\simeq K^{-1}(D_1+D)$ and $(E^0,\Phi^0)$ is the 
polystable Higgs bundle 
\begin{equation}\label{eq:caseiii}
(K^{-1}(D_1+D)\oplus 1,\varphi)\oplus (K(-D_2-D),0),
\end{equation}
where $\varphi$ is the projection of $\Phi_1:1\to V\otimes K$ to the quotient. 
\item $\tau\geq 0$, $V$ is unstable and its maximal destabilizing line subbundle $L$ has $\deg(L)>\tfrac13\deg(V)$. In this case
there is a positive divisor $D$ so that $L,V/L$ have the isomorphisms in (iii) but now 
$(E^0,\Phi^0)$ is a length three Hodge bundle of the form
\begin{equation}\label{eq:caseiv}
(K^{-1}(D_1+D)\oplus K(-D_2-D)\oplus 1,\Phi^0),
\end{equation}
where $\Phi^0_1$ is the projection of $\Phi_1$ to $(V/L)\otimes K$, and $\Phi^0_2$ is the restriction of $\Phi_2$ to $L$. 
\end{enumerate}
In case (i) the limit corresponds to a constant harmonic map. In all other cases except (ii) with $d_1=0$ (i.e., $\Phi_1$
has no zeroes) the limit corresponds to a branched minimal surface. In case (iii) these are branched
holomorphic maps to a totally geodesic $\CH^1$ in $\CH^2$.
\end{prop}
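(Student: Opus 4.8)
The plan is to use the general fact, recalled above via Kirwan's theorem, that for a stable $(E,\Phi)$ the limit $(E^0,\Phi^0)=\lim_{t\to0}(E,t\Phi)$ exists and is a polystable $\Ct$-fixed point, hence a (possibly decomposable) Hodge bundle, of the same degree as $E$ and with $\Phi^0$ nilpotent (since $\tr(\Phi^2)=0$ is preserved). The whole argument is then an identification of this Hodge bundle in terms of the Harder--Narasimhan data of $V$. First I would dispose of case (i): if $E$ is semistable then the inclusion $1\hookrightarrow E$ forces $\deg V\geq0$, while $\tau\geq0$ gives $\deg V\leq0$, so $\deg V=0$ and $\tau=0$; for a Higgs bundle whose underlying bundle is semistable the downward flow drives $\Phi$ to zero, so the limit is $(Gr(E),0)$ with $Gr(E)$ the Jordan--H\"{o}lder graded, corresponding to a constant map. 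From here on $E$ is unstable, and I would let $L\subset V$ be the maximal destabilizing line subbundle.

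The concrete engine of the proof is a degree computation that pins down $L$. Since $\deg K^{-1}(D_1)=d_1-(2g-2)$, the inequality $\deg K^{-1}(D_1)<\tfrac13\deg V$ is precisely $2d_1+d_2<6(g-1)$, one half of \eqref{eq:d1d2}. Hence whenever $\deg L\geq\tfrac13\deg V$ the subbundle $L$ cannot be $K^{-1}(D_1)$, so the composite $L\hookrightarrow V\stackrel{\Phi_2}{\to}K(-D_2)$ is a nonzero map of line bundles and therefore identifies $L\simeq K(-D_2-D)$ for an effective divisor $D\geq0$; a degree count then forces $V/L\simeq K^{-1}(D_1+D)$. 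The constraint $\deg L\geq\tfrac13\deg V$ translates into $\deg D\leq(2g-2)-\tfrac13(d_1+2d_2)$, whose positivity is the other half of \eqref{eq:d1d2}, so such a $D$ exists. This splits the unstable case by the sign of $\deg L-\tfrac13\deg V$ exactly as in (ii), (iii), (iv).

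Next I would treat each subcase. In case (ii) every line subbundle of $V$ has degree $<\tfrac13\deg V=\mu(E)$, and a routine check of Higgs-stability (the only $\Phi_1$-invariant proper subbundles are the sub-line-bundles of $V$, whose binding constraint is exactly this inequality) shows that $(E,\Phi_1)$ is already a stable length-two Hodge bundle; the weight grading giving $1$ weight $0$ and the whole of $V$ weight $-1$ scales away $\Phi_2$ and leaves $\Phi_1$, so this single grading realizes $(E,\Phi_1)$ as the limit without any degeneration of the bundle. In cases (iii) and (iv) the bundle must degenerate to one containing the Harder--Narasimhan graded $L\oplus(V/L)$, and the surviving nilpotent Higgs field is read off from $\Phi$: the projection of $\Phi_1$ to $(V/L)\otimes K$ and the restriction of $\Phi_2$ to $L$ persist while the complementary components die. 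When $\deg L=\tfrac13\deg V$ the slopes of $(L,0)$ and of $((V/L)\oplus1,\varphi)$ both equal $\mu(E)$, so the limit is the polystable direct sum \eqref{eq:caseiii}, a branched holomorphic map into a geodesic $\CH^1$; when $\deg L>\tfrac13\deg V$ the three line bundles assemble into the length-three Hodge bundle \eqref{eq:caseiv}. In every case except (i), and except (ii) with $d_1=0$, the surviving $\Phi^0$ acquires zeroes along $D$, $D_1$ or $D_2$, so the limit is branched.

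The step I expect to be the main obstacle is the rigorous identification of the limit in cases (iii) and (iv). The difficulty is that the one-parameter weight grading which diagonalizes $\Phi^0$ (weights increasing along the surviving maps $L\to1\to V/L$) is opposite to the grading needed to split the Harder--Narasimhan extension $0\to L\to V\to V/L\to0$; a single diagonal gauge therefore makes that extension class blow up rather than vanish, so, unlike the $t\to\infty$ computation, the limit cannot be produced by one explicit gauge transformation. I would resolve this by invoking the analytic limit of the downward Morse flow to guarantee a polystable fixed point in the closure of the orbit, and then using the degree bookkeeping of the second paragraph together with uniqueness of the polystable representative of a closed orbit to conclude that this fixed point must be the stated graded object. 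The inequalities \eqref{eq:d1d2}, which guarantee both the effectivity of $D$ and the equality of slopes needed for polystability, are exactly what make this identification consistent.
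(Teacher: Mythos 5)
Your treatment of cases (i) and (ii), and the degree bookkeeping that identifies $L\simeq K(-D_2-D)$ and $V/L\simeq K^{-1}(D_1+D)$, essentially match the paper (one omission: you only obtain an effective $D\geq 0$, whereas the proposition asserts $D>0$; strict positivity follows because $(E,\Phi)$ is \emph{not} a Hodge bundle, so the extension \eqref{eq:exactV} is non-trivial, and a nowhere-vanishing $\Phi_2|_L$ would give $L\simeq K(-D_2)$ and split it). The genuine gap is in cases (iii) and (iv), where you have talked yourself out of the correct argument by a sign error. The extension class of $0\to L\to V\to V/L\to 0$ is a $(0,1)$-form $\beta$ with values in $\Hom(V/L,L)$, not $\Hom(L,V/L)$. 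If a diagonal gauge acts by $t^{\lambda_{V/L}}$, $t^{\lambda_L}$, $t^{\lambda_1}$ on the smooth splitting $E\simeq V/L\oplus L\oplus 1$, then this term scales by $t^{\lambda_{V/L}-\lambda_L}$, while a component of $t\Phi$ mapping the summand with weight $\lambda_i$ to the one with weight $\lambda_j$ scales by $t^{1+\lambda_i-\lambda_j}$. The Hodge grading for the surviving chain $L\to 1\to V/L$ has weights \emph{increasing} along the chain, hence $\lambda_{V/L}-\lambda_L>0$: the very grading that preserves $\Phi^0$ also kills the Harder--Narasimhan extension class as $t\to 0$, rather than making it blow up. This is exactly how the paper proves (iii) and (iv): taking $g_t=\diag(t,1,1)$ in case (iii) and $g_t=\diag(t^2,1,t)$ in case (iv), one finds $g_t^{-1}\bar\partial_E g_t$ has extension term $t\beta$, respectively $t^2\beta$, and $g_t^{-1}(t\Phi)g_t$ has all unwanted components scaled by positive powers of $t$, so that setting $t=0$ yields precisely \eqref{eq:caseiii}, respectively \eqref{eq:caseiv}. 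So the ``main obstacle'' you identify does not exist, and the single explicit gauge computation you reject is the proof. (The same confusion appears in your case (ii), where your stated weights, giving $1$ weight $0$ and $V$ weight $-1$, would kill $\Phi_1$ and keep $\Phi_2$ under the convention $g_t^{-1}(t\Phi)g_t$; the paper uses $g_t=\diag(tI_V,1)$.)

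Your proposed fallback does not close this gap. Kirwan's theorem gives existence of a polystable limit, and separatedness gives its uniqueness, but uniqueness does not tell you \emph{which} polystable Hodge bundle it is: to conclude, you must exhibit the stated graded object as a limit of points isomorphic to $(E,t\Phi)$, i.e.\ produce it in the closure of the $\Ct$-orbit. The degree bookkeeping pins down the HN filtration of $V$, but by itself does not exclude, say, a length-two limit $(V^0\oplus 1,\Phi^0_1)$ with $V^0$ some other degeneration of $V$, or a length-three Hodge bundle assembled from different line bundles; ruling these out is precisely what the explicit gauge family (or, alternatively, an appeal to the general description of $\lim_{t\to 0}$ as the graded object of a limit filtration, which you do not make) accomplishes. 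Relatedly, you should verify that the proposed limits are genuinely polystable before identifying them with the moduli-space limit: the paper checks stability of $((V/L)\oplus 1,\varphi)$ in case (iii) and stability of \eqref{eq:caseiv} in case (iv), the latter being exactly where the hypothesis $\deg L>\tfrac13\deg V$ enters; your proposal only records the equality of slopes in case (iii) and omits the check in case (iv).
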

\begin{proof}
(i) If $E=V\oplus 1$ is a semistable bundle then $0=\deg(1)\leq\tfrac13\deg(V)$. By assumption $\tau\geq 0$, hence $\tau=0$. 
Now $(E,0)$ is $S$-equivalent to $(Gr(E),0)$ \cite{Ses67}, which is polystable and hence
the limit in moduli space. In particular, note that either $V$ is polystable and $E=Gr(E)$, or $V$ is strictly semistable
and a non-trivial extension of its maximal degree line subbundle $L$ of degree $0$. Then a Jordan-H\"older  filtration for $E$ is 
$L\subset L\oplus 1 \subset E$ and $Gr(E) = L\oplus 1\oplus V/L$.

\medskip\noindent
(ii) In this case write the holomorphic structure and Higgs field for $(E,t\Phi)$ in block decomposition with respect
to $E=V\oplus 1$:
\[
\bar\partial_E = \begin{pmatrix}\bar\partial_V&0\\0&\bar\partial\end{pmatrix}, \quad
\Phi = \begin{pmatrix} 0&t\Phi_1\\ t\Phi_2 & 0\end{pmatrix}.
\]
A simple computation shows that for 
\[
g_t = \begin{pmatrix} tI_V & 0\\ 0& 1\end{pmatrix}
\]
we have 
\[
\lim_{t\to 0} g_t^{-1}\bar\partial_E g_t = \bar\partial_E,\quad \lim_{t\to 0}g_t^{-1} t\Phi g_t = 
 \begin{pmatrix} 0&\Phi_1 \\ 0 & 0\end{pmatrix}.
\]
This represents the limit provided $(E,\Phi_1)$ is stable as a Higgs bundle. 
The $\Phi_1$-invariant subbundles of $E$ are $1\oplus\im\Phi_1$, $V$ and any line subbundle of $V$. Since $(E,\Phi)$ is
stable as a Higgs bundle and $\deg(V)<0$ the first two already satisfy the slope inequality, as does $\im\Phi_1$. So the additional
condition is that every other line subbundle $L$ of $V$ satisfies $\deg(L)<\tfrac13\deg(V)$. Note that the case $\tau=0$
is covered by part (i).

\medskip\noindent
(iii) First we note that $(V/L\oplus 1,\varphi)$ is a stable Higgs bundle, since $V/L$ is the only $\varphi$-invariant proper subbundle 
and $\deg(V/L)=\tfrac23\deg(V)<0$ hence $\deg(V/L)<\tfrac12\deg(V/L)$. Also $\tfrac12\deg(V/L) =\deg(L)$ hence \eqref{eq:caseiii} 
is a polystable Higgs bundle. We also note
that the restriction of $\Phi_2$ to $L$ is a holomorphic section of $L^{-1}\otimes K(-D_2)$ and this must have zeroes
otherwise the extension \eqref{eq:exactV} splits. Let $D>0$ be the divisor of these zeroes, then $L\simeq K(-D_2-D)$ and
since $\det(V)\simeq \caO(D_1-D_2)$ it follows that $V/L\simeq K^{-1}(D_1+D)$.

Now $V$ can be written as an extension of the form
\begin{equation}\label{eq:LV}
0\to L\to V\to V/L\to 0.
\end{equation}
This gives a $C^\infty$-isomorphism $E \simeq V/L\oplus L\oplus 1$.
With respect to such a decomposition we can write
\[
\bar\partial_E =
\begin{pmatrix}
\bar\partial_{V/L} &0&0\\ \beta&\bar\partial_L &0\\ 0&0&\bar\partial\end{pmatrix},\quad
\Phi = \begin{pmatrix}
0&0&\phi_{13}\\ 0&0&\phi_{23}\\ \phi_{31}& \phi_{32}&0\end{pmatrix}.
\]
Using the gauge transformation
\[
g_t = \begin{pmatrix} t&0&0\\0&1&0\\0&0&1\end{pmatrix}.
\]
a straighforward calculation gives
\[
g_t^{-1}\bar\partial_E g_t =
\begin{pmatrix}
\bar\partial_{V/L} &0&0\\ t\beta&\bar\partial_L &0\\ 0&0&\bar\partial\end{pmatrix},\quad
g_t^{-1}(t\Phi)g_t = \begin{pmatrix}
0&0&\phi_{13}\\ 0&0&t\phi_{23}\\ t^2\phi_{31}& t\phi_{32}&0\end{pmatrix}.
\]
Setting $t=0$ gives \eqref{eq:caseiii}. Note that $\varphi=\varphi_{13}$ is the projection of $\Phi_1:K^{-1}(D_1)\to V$ 
onto $V/L$ and has divisor of zeroes $D_1+D$ since it vanishes when either $\Phi_1=0$ or at the support of $\im\Phi_1\cap L$. 
But $\Phi_2\circ\Phi_1=0$ so the support of $D$ is all the points at which $\im\Phi_1$ is not zero but lies in $L$.

\medskip\noindent
(iv) As in the previous case we first show that the proposed limit \eqref{eq:caseiv} is a stable Higgs bundle. Since the
only proper $\Phi^0$ invariant subbundles are $V/L$ and $V/L\oplus 1$, and since $\deg(V)\leq 0$, the stability condition 
is the single inequality $\tfrac12\deg(V/L)<\tfrac13\deg(V)$, i.e., $\deg(L)> \tfrac13\deg(V)$. Just as in part (iii)
we note that $L\simeq K(-D_2-D)$ where $D>0$ is the divisor of zeroes of $\Phi_2$ restricted to $L$. Hence $V/L\simeq
K^{-1}(D_1+D)$. 

Now to show this gives the limit we use the same argument as part (iii) but with  
\[
g_t = \begin{pmatrix} t^2&0&0\\0&1&0\\0&0&t\end{pmatrix}.
\]
In this case
\[
g_t^{-1}\bar\partial_E g_t = 
\begin{pmatrix}
\bar\partial_{V/L} &0&0\\ t^2\beta&\bar\partial_L &0\\ 0&0&\bar\partial\end{pmatrix},\quad
g_t^{-1}(t\Phi)g_t = \begin{pmatrix}
0&0&\phi_{13}\\ 0&0&t^2\phi_{23}\\ t^2\phi_{31}& \phi_{32}&0\end{pmatrix}.
\] 
Setting $t=0$ gives \eqref{eq:caseiv} with $\Phi^0_1=\varphi_{13}$, the projection of $\Phi_1$ onto $(V/L)\otimes K$, 
and $\Phi^0_2=\varphi_{32}$, the restriction of $\Phi_2$ to $L$. The latter has zeroes $D_2+D$ by definition, and the
former has zeroes $D_1+D$ for the same reason as in part (iii).
\end{proof}
Note that this argument works perfectly well even when $D_1,D_2$ already have points in common
(although $D$ will not be the full branch point divisor in that case), and 
it tells us something about singularities of the nilpotent cone. When $C$ consists of length three Hodge bundles
$\caU_C$ is smooth and of dimension $5(g-1)$: this follows either from the Morse index
calculation of Gothen \cite[Prop 3.2]{Got} or the dimension count in \cite{LofM19} for $\oV_c(d_1,d_2)$, given Corollary
\ref{cor:U}. Any point on $C$ which is a limit point as $t\to 0$ (i.e., from some $\caU_{C'}$ for $C'\neq C$) lies on both the stable 
and unstable manifolds of $C$, which are transverse, and therefore the tangent space at this point has dimension greater than $\caU_C$. 
Thus such limit points are singular points of the nilpotent cone.
We can ask whether every length three Hodge bundle of the form \eqref{eq:caseiv} (i.e., with common divisor $D$) is
such a singular point. The answer is no: by the following result there are many such Hodge bundles which are not limit
points.
Recall from \cite{LofM19} that the necessary and sufficient conditions for \eqref{eq:caseiv} to give a stable Higgs bundle are
the inequalities
\begin{equation}\label{eq:ineq}
0\leq 2d_1'+d_2' < 6(g-1),\quad 0\leq d_1'+2d_2'<6(g-1),
\end{equation}
where $d_j'=\deg(D_j+D)$.
\begin{prop}
Let $D_1,D_2,D$ be effective divisors, $D>0$, set $D_j'=D_j+D$ and suppose their degrees $d_1',d_2'$ satisfy 
$d_1'\leq d_2'$ and $d_1'+d_2'<2(g-1)$. 
Then the Hodge bundle with $V$ given by \eqref{eq:caseiv} does not lie at the limit as $t\to 0$ of a $\C^\times$-orbit
in a different unstable manifold.
\end{prop}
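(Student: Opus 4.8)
The plan is to argue by contradiction, using the classification of $t\to 0$ limits in Proposition \ref{prop:limit0} to pin down the only possible source of such a flow, and then to show that the extension defining that source is obstructed by a cohomology vanishing which is forced precisely by the hypotheses.

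First I would suppose that $(E^0,\Phi^0)$, the length three Hodge bundle built from $V=K^{-1}(D_1')\oplus K(-D_2')$ as in \eqref{eq:caseiv}, is the limit as $t\to 0$ of an orbit $(E,t\Phi)$ with $(E,\Phi)$ in some $\caU_{C'}$, $C'\neq C$. Since the limit is a length three Hodge bundle, Proposition \ref{prop:limit0} forces $(E,\Phi)$ to be of type (iv): a stable Higgs bundle whose underlying rank two bundle $V$ is unstable, with maximal destabilizing line subbundle $L$ satisfying $\deg L>\tfrac13\deg V$, and whose $t\to 0$ limit is exactly $(E^0,\Phi^0)$. The hypotheses $d_1'\le d_2'$ and $d_1'+d_2'<2(g-1)$ give $\deg K(-D_2')-\deg K^{-1}(D_1')=2(2g-2)-(d_1'+d_2')>0$, so that necessarily $L\simeq K(-D_2')$ and $V/L\simeq K^{-1}(D_1')$, matching the data of \eqref{eq:caseiv}.

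The key point is that in order for the orbit to lie in a genuinely different unstable manifold the destabilizing subbundle $L$ cannot split off, i.e. $V$ must be a \emph{non-trivial} extension
\[
0\to L\to V\to V/L\to 0 .
\]
Indeed, if $V\cong L\oplus V/L$ then, tracking the filtration used in Proposition \ref{prop:limit0}(iv), the extension class vanishes and the $t\to\infty$ limit of the orbit returns to $C$, so that $(E,\Phi)$ lies in $\overline{\caU_C}$ rather than in a distinct stratum. Such non-trivial extensions are classified by $H^1(\Hom(V/L,L))=H^1(K(-D_1')\otimes K(-D_2'))=H^1(K^2(-D_1'-D_2'))$. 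Now
\[
\deg K^2(-D_1'-D_2')=2(2g-2)-(d_1'+d_2')>2g-2
\]
precisely because $d_1'+d_2'<2(g-1)$, so Serre duality gives $H^1(K^2(-D_1'-D_2'))\simeq H^0(K^{-1}(D_1'+D_2'))^{*}=0$, the dual line bundle having negative degree. Hence no non-trivial extension exists, $V$ is forced to decompose, and we obtain the contradiction that no orbit in a different unstable manifold can limit to $(E^0,\Phi^0)$.

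The step I expect to be the main obstacle is exactly the claim that a decomposable $V$ cannot produce a flow from a distinct stratum. The delicate configurations are the Higgs bundles with $V\cong K^{-1}(D_1')\oplus K(-D_2')$ carrying extra off-diagonal Higgs field components supported on the common divisor $D$: these are weight-raising deformations that remain tangent to the nilpotent cone (they account for the fact that $(E^0,\Phi^0)$ is still a singular point of $\caN^c$ once $D>0$), so one must check that they are absorbed into $\overline{\caU_C}$ rather than connecting $(E^0,\Phi^0)$ to some $\caU_{C'}$. I would settle this either by a direct analysis showing that for such decomposable bundles the $t\to\infty$ limit stays on $C$, or by computing the weight decomposition of the deformation hypercohomology $\mathbb{H}^1$ at $(E^0,\Phi^0)$ and verifying that, once the vanishing $H^1(K^2(-D_1'-D_2'))=0$ removes the extension directions, the remaining positive-weight classes all lie along $\overline{\caU_C}$.
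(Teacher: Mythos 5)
Your proposal follows the paper's own route step for step: the reduction to case (iv) of Proposition \ref{prop:limit0}, the identification $L\simeq K(-D_2')$, $V/L\simeq K^{-1}(D_1')$, and the Serre-duality vanishing of $H^1(K^2(-D_1'-D_2'))$ are exactly the paper's argument. The one step you add --- ruling out a decomposable $V$ --- is precisely the step the paper passes over silently (its phrase ``the existence of this requires $1\leq \dim H^1(K^2(-D_1'-D_2'))$'' tacitly assumes the destabilizing extension \eqref{eq:LV} must be non-split), and you are right to single it out as the crux: Proposition \ref{prop:limit0} only assumes non-triviality of the Higgs-field extension \eqref{eq:exactV}, which is a different extension from \eqref{eq:LV}, so the split case genuinely needs an argument.

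However, the claim you make to settle it is false: a splitting $V\cong L\oplus(V/L)$ does \emph{not} force $\lim_{t\to\infty}(E,t\Phi)$ back into $C$. That limit is governed by \eqref{eq:exactV}, i.e.\ by the zero divisors of $\Phi_1,\Phi_2$ (this is the lemma preceding Corollary \ref{cor:U}), not by the destabilizing filtration of $V$. Concretely, take $D_1=D_2=0$ and $D_1'=D_2'=D$ with $0<\deg D<g-1$ (so $g\geq 3$), let $s\in H^0(\caO(D))$ be the canonical section, and choose $e\in H^0(K^2(-D))$ vanishing at no point of $D$ (possible since $h^1(K^2(-D-p))=h^0(K^{-1}(D+p))=0$ for $p\in D$, so $h^0$ drops when $p$ is subtracted). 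On the split bundle $V=K(-D)\oplus K^{-1}(D)$ set $\Phi_1=(-e,s)$ and $\Phi_2=(s,e)$ with respect to this splitting. Then $\Phi_2\circ\Phi_1=0$ and neither $\Phi_j$ has a zero, so \eqref{eq:exactV} reads $0\to K^{-1}\to V\to K\to 0$ and is non-split ($V\not\cong K^{-1}\oplus K$ by Krull--Schmidt); hence $(V\oplus 1,\Phi)$ is not a Hodge bundle, is stable (its $\Phi$-invariant subbundles lie in $\ker\Phi_2\simeq K^{-1}$ or in $K^{-1}\oplus 1$, all of negative slope), and is a point of $\caV_c(0,0)$. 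By the lemma preceding Corollary \ref{cor:U} its orbit therefore lies in the unstable manifold of the critical manifold $\{V=K^{-1}\oplus K\}$, which is different from $C$; yet the case (iv) gauge computation of Proposition \ref{prop:limit0}, run with $\beta=0$, gives
\[
\lim_{t\to 0}(E,t\Phi)\;=\;\bigl(K^{-1}(D)\oplus K(-D)\oplus 1,\ (s,s)\bigr),
\]
which is exactly the (stable) Hodge bundle \eqref{eq:caseiv} with common divisor $D$, and $d_1'+d_2'=2\deg D<2(g-1)$ satisfies your hypotheses. So the ``weight-raising'' configurations you flagged as delicate are not absorbed into $\oU_C$: they genuinely connect a different unstable manifold to these Hodge bundles, and neither your $t\to\infty$ claim nor a weight computation at $(E^0,\Phi^0)$ can make them disappear. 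The gap is therefore not repairable along the lines you propose; and since the paper's own proof excludes only non-split $V$, these examples appear to contradict the Proposition as stated, not merely both proofs of it.
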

\begin{proof}
First, the two inequalities imply that \eqref{eq:caseiv} corresponds to a stable Higgs bundle with $\tau\geq
0$. By Prop.\ \ref{prop:limit0} such a limit can only be obtained from case (iv), where $V$ has unique maximal destabilizing
line subbundle $L\simeq K(-D_2')$ and $V/L\simeq K^{-1}(D_1')$. The existence of this requires
\[
1\leq \dim H^1(K^2(-D_1'-D_2')) = \dim H^0(K^{-1}(D_1'+D_2')),
\]
using Serre duality.
But the right hand dimension is zero whenever $\deg(K^{-1}(D_1'+D_2'))<0$, i.e., when $d_1'+d_2'<2(g-1)$. 
\end{proof}

\section{Connected components of $\caM(\Sigma,\CH^2)$.}
Our aim is to prove Theorem \ref{thm:connected}. First we must recall some more facts from \cite{LofM19}.
From \cite{LofM19} $\caV$ is a disjoint union $\cup_{(d_1,d_2)}\caV(d_1,d_2)$ where the non-negative integers $d_1,d_2$ satisfy the
inequalities \eqref{eq:ineq}. Points of $\caV(d_1,d_2)$ are parametrised by the data $(c,D_1,D_2,\xi)$ where $D_1,D_2$ are
effective divisors of degree $d_1,d_2$ and $\xi$ denotes the extension class for the extension \eqref{eq:exactV}. This
class can be freely chosen in $H^1(\Sigma_c,K^{-2}(D_1+D_2))$ and this parametrisation gives $\caV(d_1,d_2)$ the structure
of a complex analytic family over $c\in\caT_g$. The fibre over $c\in\caT_g$, which we will denote by $\caV_c(d_1,d_2)$,  
is the holomorphic vector bundle over 
\[
\{(D_1,D_2)\in S^{d_1}\Sigma_c\times S^{d_2}\Sigma_c: D_1\cap D_2=\emptyset\}
\]
with fibre $H^1(\Sigma_c,K^{-2}(D_1+D_2))$. 

We need to show first that this parametrisation is smooth with respect to the smooth structure of $\caM(\Sigma,\CH^2)$
given above, which uses its embedding in $\caT_g\times\caR(\pi_1\Sigma,G)$. Since we are always dealing 
with points which are smooth in the latter we can use the nonabelian Hodge correspondence to identify the smooth locus
of $\caR(\pi_1,G)$ with the smooth locus of Higgs bundle moduli space $\caH(\Sigma_c,G)$.  It therefore suffices to prove the following
lemma. 
\begin{lem}\label{lem:Z}
For each fixed $c\in\caT_g$ we get a holomorphic embedding
\[
\caZ:\oV_c(d_1,d_2)\to \caH(\Sigma_c,G),
\]
by assigning to $(D_1,D_2,\xi)$ the Higgs bundle $(E,\Phi)$ for which $E=V\oplus 1$ where $V$ and $\Phi$ arise from the 
extension $\xi$ using \eqref{eq:exactV}.
\end{lem}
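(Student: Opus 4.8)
The plan is to prove that $\caZ$ is holomorphic, injective and an immersion, and finally that it is a homeomorphism onto its image; these together give the embedding statement. Before anything else I would record that $\caZ$ takes values in the smooth locus of $\caH(\Sigma_c,G)$. Indeed the inequalities \eqref{eq:ineq} are precisely the conditions from \cite{LofM19} under which the $PU(2,1)$-Higgs bundle $(E,\Phi)$ assembled from \eqref{eq:exactV} is stable, and stability is insensitive to common zeroes of $\Phi_1,\Phi_2$; hence $(E,\Phi)$ is stable for every $(D_1,D_2,\xi)\in\oV_c(d_1,d_2)$, including the branch locus $\{D_1\cap D_2\neq\emptyset\}$. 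Since $\Phi_2\circ\Phi_1=0$ gives $\tr(\Phi^2)=0$, the image moreover lies in the nilpotent cone $\caN^c$. At a stable point the tangent space is the hypercohomology $\H^1(\caA^*)$, and I would work with the splitting \eqref{eq:H1} throughout.

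For holomorphicity I would use the holomorphic family underlying the complex-analytic structure of $\oV_c(d_1,d_2)$ from \cite{LofM19}: over its total space there is a universal extension realising \eqref{eq:exactV} fibrewise, with its canonical Higgs field, giving a holomorphic family of stable Higgs bundles whose classifying map into $\caH(\Sigma_c,G)$ is $\caZ$; this is holomorphic by the corepresentability of the deformation problem at stable points \cite{BisR}. Injectivity I would obtain by reconstructing $(D_1,D_2,\xi)$ from the isomorphism class. The grading $E=V\oplus 1$ belongs to the $H^\C$-structure and is therefore preserved by isomorphisms of $PU(2,1)$-Higgs bundles, which act by a scalar on the trivial summand; the divisors are intrinsic, being the zero divisors $D_1=Z(\Phi_1)$ and $D_2=Z(\Phi_2)$; and with $D_1,D_2$ fixed the canonical sub $K^{-1}(D_1)=\im\Phi_1\subset V$ and quotient $K(-D_2)$ determine the extension. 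The intertwining condition forces the reconstructing map to act by a single common scalar on sub and quotient, so it is an isomorphism of extensions and leaves the class $\xi$ unchanged; thus $\caZ$ is injective.

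The immersion property is the crux, and I expect it to be the principal obstacle, since along the branch locus $\caN^c$ is singular (as noted after Proposition \ref{prop:limit0}) and one cannot simply compare equidimensional smooth manifolds as on the open locus, where Theorem \ref{thm:smooth} makes $\caN^c$ smooth of dimension $5(g-1)=\dim\oV_c(d_1,d_2)$. I would instead show $\ker d\caZ=0$ uniformly, by deformation theory. A tangent vector lies in $\ker d\caZ$ exactly when the first-order deformation of $(E,\Phi)$ it induces is isomorphic to the trivial one, and two facts then suffice. First, the zero divisors $Z(\Phi_1),Z(\Phi_2)$ are isomorphism invariants varying holomorphically in families, so a trivial first-order deformation cannot move them: $\dot D_1=\dot D_2=0$. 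Second, with the divisors fixed, $\caZ$ restricts to the linear family of extensions of $K(-D_2)$ by $K^{-1}(D_1)$, in which only the off-diagonal term $\beta$ of \eqref{eq:dbarV} moves; its Kodaira--Spencer map is the tautological map $H^1(K^{-2}(D_1+D_2))\to H^1(\KER)\subset\H^1(\caA^*)$. Here I would use that, by the corank-one computation giving Lemma \ref{lem:rank}, the line bundle $\KER$ is exactly the sheaf $\Hom(K(-D_2),K^{-1}(D_1))\cong K^{-2}(D_1+D_2)$ of nilpotent endomorphisms realising \eqref{eq:exactV}: a local calculation as in \eqref{eq:bracket}, using $\Phi_2\circ\Phi_1=0$, shows these nilpotents and only these lie in $\ker(\ad\Phi)$. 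Consequently this restricted Kodaira--Spencer map is an isomorphism onto $H^1(\KER)$, in particular injective, and $\dot\xi=0$ too.

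Assembling these, $\caZ$ is an injective holomorphic immersion into the smooth locus of $\caH(\Sigma_c,G)$. On the open part $\caV_c(d_1,d_2)$ the image is then an open subset of the smooth cone $\caN^c$ (both of dimension $5(g-1)$), which is the smooth compatibility with the structure from Theorem \ref{thm:smooth} that the lemma is meant to provide; over the branch locus the image stays smooth although $\caN^c$ does not, because there $\caZ(\oV_c(d_1,d_2))$ is just one of the several branches of $\caN^c$ meeting at such a point. The remaining point, that $\caZ$ is a homeomorphism onto its image so the immersion is a genuine embedding, I would settle by a closedness/properness argument built on the continuous dependence of the invariants $D_1,D_2$ and $\xi$ on the Higgs bundle. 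The overall subtlety to keep in view is precisely the branch locus, where the naive dimension comparison breaks down and the uniform deformation-theoretic argument is what carries the proof through.
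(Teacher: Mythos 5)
Your proposal is correct in substance and follows the same overall skeleton as the paper's proof — reduce to showing $\caZ$ is an injective holomorphic immersion (injectivity being quoted from \cite{LofM19}), work at stable points so the tangent space to $\caH(\Sigma_c,G)$ is $\H^1(\caA^*)$ as in \cite{BisR}, then kill a kernel vector of $d\caZ$ by first killing its divisor components and then its extension-class component — but the execution of the key immersion step is genuinely different. The paper is entirely computational: it writes explicit transition matrices \eqref{eq:g} on a Leray cover adapted to $D_1+D_2$, represents $d\caZ(w,y,\eta)$ by the Cech pair $(A,B)=(g'(0)g^{-1},\Phi'(0))$, and, assuming $(A,B)=(\delta C,[\Phi,C])$, forces $w=y=0$ by a polynomial-degree comparison ($\deg w_j<n_j$ against divisibility by $z_j^{n_j}$), after which $A=\delta C$ says precisely that the cocycle representing $\eta$ is trivial. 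You replace both steps by structural arguments: a trivial first-order deformation cannot move the zero divisors, and the pure extension direction enters $\H^1(\caA^*)$ through the tautological map onto the summand $H^1(\KER)$ of \eqref{eq:H1}, using $\KER\simeq K^{-2}(D_1+D_2)$. Both structural claims are true — the identification of $\KER$ follows from the same local computation \eqref{eq:bracket} behind Lemma \ref{lem:rank}, and injectivity of $H^1(\KER)\to\H^1(\caA^*)$ is exactly what \eqref{eq:H1} provides — so your route buys conceptual clarity and coordinate-independence, while the paper's computation has the virtue of proving, rather than asserting, the two facts your argument rests on.

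The one step you should not leave as an assertion is the first one: ``a trivial first-order deformation cannot move the divisors.'' Isomorphism-invariance of $D_1,D_2$ point by point does not by itself give a first-order statement, and you cannot differentiate through an ambient map, because the divisor assignment does not extend holomorphically to a neighbourhood in $\caH(\Sigma_c,G)$: for a nearby Higgs bundle off the image, the zero scheme of $\Phi_1$ as a section of the rank-two bundle $V\otimes K$ is generically finite or empty, not a divisor of degree $d_1$. The fix is short but necessary: restrict the family to the dual numbers; vanishing of the Kodaira--Spencer class identifies the deformation with the constant one, hence identifies the zero scheme of $\Phi_1(\epsilon)$ with that of $\Phi_1$; and since $\Phi_1:K^{-1}(D_1)\to V$ in \eqref{eq:exactV} is a subbundle inclusion, that zero scheme is purely divisorial and equal to $D_1$, so the induced tangent vector to $S^{d_1}\Sigma_c$ vanishes (similarly for $D_2$). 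This is exactly the content the paper's degree-counting delivers concretely. Finally, you flag but do not carry out the ``homeomorphism onto image'' point; the paper's own proof stops at the injective-immersion statement, so nothing is missing there relative to the paper.
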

We know from \cite{LofM19} that $\caZ$ is one-to-one, so it suffices to show that it is a holomorphic immersion. 

Before we begin the proof we need to fix a convention for which way to represent $1$-cocycles with values in a vector bundle. For a
vector bundle $F$ with model fibre $A$ and local transition relations $\psi_i=a_{ij}\psi_j$ between local trivialisations
$\psi_i$ over a Leray cover $\{U_i\}$, let $(\xi_{ij},U_i,U_j)$ be a $1$-cocycle for a class in $H^1(F)$, i.e., 
$\xi_{ij}\in\Gamma(U_i\cap U_j,F)$ satisfy  $\xi_{ij}+\xi_{jk}=\xi_{ik}$. Then we choose to represent this by the local functions
\[
c_{ij}=\psi_i(\xi_{ij}): U_i\cap U_j\to A,
\]
and the $1$-cocycle conditions are equivalent to
\[
c_{ij} + a_{ij}c_{jk} = c_{ik}.
\]
Note that the opposite convention, to use $\psi_j(\xi_{ij})$ instead, is used in
Gunning \cite{Gun}. In what follows we need this for $H^1(\caA^0)$ and $H^1(K^{-2}(D_1+D_2))$.
\begin{proof}
To describe the map $\caZ$ concretely about a point $(D_1,D_2,\xi)$, let $P_1,\ldots,P_d$ be the points in the support of
$D_1+D_2$ and let $U_0,\ldots,U_d$ be the Leray cover of $\Sigma_c$ for which 
each $U_j$ for $j\geq 1$ is an open disc about $P_j$, with $U_j\cap U_k=\emptyset$ for $j\neq k$, and $U_0\cap U_j$
is an annulus excluding an open disc about $P_j$. Since $V$ is determined by the extension class $\xi$,
$V$ has $1$-cocycle $g=\{(g_{0j},U_0,U_j):1\leq j\leq d\}$ 
given by
\begin{equation}\label{eq:g}
g_{0j} = \begin{pmatrix} \alpha_{0j} & \lambda_{0j}\\ 0 &\beta_{0j}\end{pmatrix}
\end{equation}
where
\[
\alpha_{0j} = z^{-n_j}\frac{dz_0}{dz_j},\quad 
\beta_{0j}= z_j^{m_j}\frac{dz_j}{dz_0},\quad
\lambda_{0j}= \beta_{0j}\xi_{0j}dz_0^2.
\]
Here, for $j\geq 1$, $z_j$ is a local parameter in $U_j$ centred at $P_j$ and $n_j$ is the degree of $D_1$ at $P_j$
while $m_j$ is the degree of $D_2$ at $P_j$. By $dz_0$ we simply mean a non-vanishing holomorphic $1$-form on $U_0$.
By $\xi_{0j}$ we mean the local section of $K^{-2}(D_1+D_2)$ over $U_0\cap U_j$ which comprises a $1$-cocycle representing 
the extension class $\xi$.  

The $1$-cocycle $g$ determines a trivialisation $\chi_j$ of $V$, and dual $\chi^*_j$ of $V^*$, over $U_j$. 
When we think of the Higgs field components as $\Phi_1\in H^0(V\otimes K)$ and $\Phi_2\in H^0(V^*\otimes K)$ these 
satisfy
\[
\chi_0(\Phi_1) = \begin{pmatrix} dz_0 \\ 0\end{pmatrix},\quad
\chi^*_0(\Phi_2) = \begin{pmatrix} 0 & dz_0 \end{pmatrix},
\]
and for $j\geq 1$
\begin{equation}\label{eq:chi(Phi)}
\chi_j(\Phi_1) = \begin{pmatrix} z_j^{n_j} dz_j \\ 0\end{pmatrix},\quad
\chi^*_j(\Phi_2) = \begin{pmatrix} 0 & z_j^{m_j} dz_j \end{pmatrix}.
\end{equation}

An open neighbourhood around $D_1\in S^{d_1}\Sigma_c$ (respectively $D_2\in S^{d_2}\Sigma_c$) is determined 
by collection of monic polynomials $z_j^{n_j}+u_j(z_j)$ (respectively $z_j^{m_j}+v_j(z_j)$) whose zeroes lie in $U_j$. Note that
if $P_j$ is not in the support of $D_1$ then $n_j=0$ and $u_j$ is the zero polynomial (and likewise for $m_j$, $v_j$ when 
$P_j$ is not in the support of $D_2$).  The coefficients of 
the polynomials $u_j(z_j),v_j(z_j)$ provide a local chart about $(D_1,D_2)$. At a pair 
$(D_1',D_2')$ in this neighbourhood of $(D_1,D_2)$ any $\xi'\in
H^1(K^{-2}(D_1'+D_2'))$ can be represented by a $1$-cocycle $\{(\xi'_{0j},U_0,U_j)\}$. The map $\caZ$ maps $(D_1',D_2',\xi')$
to the Higgs bundle constructed as above but with $z_j^{n_j}$ replaced by $z_j^{n_j}+u_j(z_j)$, $z_k^{m_k}$ replaced by
$z_k^{m_k}+v_k(z_k)$, and $\xi_{0j}$ replaced by $\xi'_{0j}$. It is easy to see that smooth variations of these parameters
result in smooth variations of the Higgs bundle. 

Our aim now is to show that $d\caZ$ has trivial kernel at each point. First we describe the tangent space to
$\oV_c(d_1,d_2)$ at a given point. Since $\oV_c(d_1,d_2)$ is a vector bundle over $S^{d_1}\Sigma_c\times S^{d_2}\Sigma_c$ 
we can fix a local trivialisation
over a neighbourhood of $(D_1,D_2,\xi)$ and identify the tangent space at that point with 
\[
T_{D_1} S^{d_1}\Sigma_c \oplus T_{D_2} S^{d_2} \oplus H^1(K^{-2}(D_1+D_2)).
\]
We will write elements of this in the form $(w,y,\eta)$. Further, the vector $w$ (and similarly $y$) can be described as
follows. It is well-known that
\[
T_{D_1} S^{d_1}\Sigma_c = \oplus_j (\fm_{P_j}/\fm_{P_j}^{n_j+1}),
\]
where $\fm_{P_j}\subset\caO_{P_j}$ is the maximal ideal of locally holomorphic functions about $P_j$ which vanish at $P_j$. We will identify
\[
\fm_{P_j}/\fm_{P_j}^{n_j+1} \simeq \{z_jw_j(z_j): w_j\in\C[z_j],\ \deg(w_j)\leq n_j-1\},
\]
and therefore write $w\in T_{D_1} S^{d_1}\Sigma_c$ as a tuple $w=(w_1,\ldots,w_k)$ of polynomials. This is tangent to the curve 
$D_1(t)$ on $S^{d_1}\Sigma_c$ for which $D_1(t)$ is the divisor of zeroes given by the locally defined polynomial $z_j^{n_j} + t w_j(z_j)$ 
in $U_j$ (for $t$ sufficiently close to zero).

Given such a tangent vector $(w,y,\xi)$, let $(D_1(t),D_2(t),\xi(t))$ be the curve it is tangent to, obtained as above using
$u_j=tw_j$, $v_j=ty_j$ and $\xi(t)=\xi +t\eta$ (using the local trivialisation of $\oV_c(d_1,d_2)$). Let
$\caZ(t)=(E(t),\Phi(t))$ denote the image curve, with $E(t)=V(t)\oplus 1$, and let $g_{0j}(t)$ denote the transition functions for $V(t)$,
in the form \eqref{eq:g}. These have entries
\[
\alpha_{0j}(t) = \frac{1}{z^{n_j}+tw_j}\frac{dz_0}{dz_j},\quad
\beta_{0j}(t) = (z^{m_j}+ty_j)\frac{dz_j}{dz_0},\quad
\lambda_{0j}(t) = \beta_{0j}\xi(t)_{0j}dz_0^2.
\]
In particular, $\chi_0(\Phi_1)$, $\chi^*_0(\Phi_2)$ are time-independent.
The derivative $\caZ'(0)$ is represented by a hypercohomology class in
\[
\H^1(\caA^*)\simeq \frac{\{(A,B)\in \caC^1(\caA^0)\oplus \caC^0(\caA^1):\delta A=0,\ \delta B=[\Phi, A]\}}{
\{(\delta C,[\Phi,C]):C\in \caC^0(\caA^0)\}},
\]
following \cite{BisR}. Specifically, it is the class of a pair $(A,B)$ for which $A$ is represented by the $1$-cocyle 
\begin{equation}\label{eq:g-cocycle}
g'(0)g^{-1}=
\begin{pmatrix} \alpha'(0)\alpha^{-1} & -\lambda\alpha'(0)\alpha^{-1}\beta^{-1} +\lambda'(0)\beta^{-1}\\
0&\beta'(0)\beta^{-1}\end{pmatrix}
\end{equation}
where we have dropped the Cech cocycle subscripts for notational simplicity. If we write $B=(B_1,B_2)$ where $B_j=\Phi_j'(0)$ then
explicit calculation gives
\[
\chi_0(B_1)=\begin{pmatrix} 0 \\ 0\end{pmatrix},\quad 
\chi^*_0(B_2) = \begin{pmatrix} 0&0 \end{pmatrix},
\]
\begin{equation}\label{eq:chi(B)}
\chi_j(B_1) = \begin{pmatrix} w_jdz_j\\ 0\end{pmatrix},\quad
\chi^*_j(B_2)=\begin{pmatrix} 0 & y_jdz_j\end{pmatrix}.
\end{equation}
If $\caZ'(0)=0$ there must exist $C\in \caC^0(\caA^0)$ for which $A=\delta C$ and $B=[\Phi,C]$. We may assume $C$ is
represented by an upper triangular $0$-cocycle of the form
\[
\begin{pmatrix} a_j & b_j \\ 0 & c_j\end{pmatrix},
\]
where the entries are holomorphic functions in $U_j$.
The Lie bracket $[\Phi,C]$ corresponds to the pair $(-C\Phi_1,\Phi_2 C)$ and calculation gives
\[
\chi_j(C\Phi_1) = \begin{pmatrix} a_j z^{n_j}dz_j \\ 0 \end{pmatrix},\quad
\chi_j^*(\Phi_2 C) = \begin{pmatrix} 0 & c_j z^{m_j}dz_j\end{pmatrix}.
\]
Comparing this with \eqref{eq:chi(B)}, since $\deg(w_j)<n_j$ and $\deg(y_j)<m_j$ it follows that $B=[\Phi,C]$ if and only if 
both sides are zero, i.e., $a_j=0=c_j$ and $w_j,y_j$ are both identically zero. This in turn means that the $1$-cocycle in
\eqref{eq:g-cocycle}, which represents $A$, is
\[
\begin{pmatrix} 0 & \eta_{0j} dz_0^2\\ 0 & 0\end{pmatrix}.
\]
Now $A=\delta C$ is the condition that the $1$-cocycle $\eta\in H^1(K^{-2}(D_1+D_2))$ is trivial.
Thus the kernel of $d\caZ$ is trivial.
\end{proof}
It follows that each $\caV(d_1,d_2)$ is smooth and connected as a submanifold of $\caV$, and each is open since they have the
same dimension. Thus we conclude:
\begin{lem}
The smooth submanifold $\caV\subset\caM(\Sigma,\CH^2)$ is a disjoint union of connected components $\caV(d_1,d_2)$.
\end{lem}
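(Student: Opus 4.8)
The plan is to combine everything already in place: the disjoint decomposition and analytic family structure of $\caV$ recalled from \cite{LofM19}, the holomorphic embedding $\caZ$ of Lemma~\ref{lem:Z}, and the dimension computed in Theorem~\ref{thm:smooth}. The lemma then follows from the elementary fact that pairwise disjoint, connected, open subsets whose union is a space are exactly its connected components. So the only two things to verify are the connectedness and the openness of each $\caV(d_1,d_2)$ inside $\caV$.

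For connectedness I would argue directly from the parametrisation. Each fibre $\caV_c(d_1,d_2)$ is a holomorphic vector bundle with fibre $H^1(\Sigma_c,K^{-2}(D_1+D_2))$ over the base $\{(D_1,D_2)\in S^{d_1}\Sigma_c\times S^{d_2}\Sigma_c:D_1\cap D_2=\emptyset\}$. The symmetric products are connected, and the incidence locus where $D_1$ and $D_2$ share a point is a proper analytic subvariety of complex codimension one, so removing it leaves the base connected; a vector bundle over a connected base is connected. Fibering these over the connected Teichm\"{u}ller space $\caT_g$ keeps the total space $\caV(d_1,d_2)$ connected.

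For openness I would use the dimension match. By Lemma~\ref{lem:Z} (established fibrewise over each $c$, and varying holomorphically in $c$ by the reduction preceding it) the map $\caZ$ realises $\caV(d_1,d_2)$ as the image of an injective holomorphic immersion into the smooth space $\caV$. The source has complex dimension $8(g-1)$, namely the fibre dimension $5(g-1)$ over the Teichm\"{u}ller dimension $3(g-1)$, and by Theorem~\ref{thm:smooth} (with $\dim PU(2,1)=8$) this equals $\dim_\C\caV$. Since an injective holomorphic immersion between complex manifolds of equal dimension is a local biholomorphism, it is an open map, so each $\caV(d_1,d_2)$ is open in $\caV$. As $\caV=\cup_{(d_1,d_2)}\caV(d_1,d_2)$ is then a disjoint union of connected open sets, each is also closed (its complement being the union of the others), and the $\caV(d_1,d_2)$ are precisely the connected components.

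The genuinely load-bearing step is the dimension equality that makes $\caZ$ an open map; this is exactly the content of Theorem~\ref{thm:smooth}, and everything else is bookkeeping. Without knowing a priori that $\caV$ is smooth of dimension $8(g-1)$, an injective immersion of a source of the same nominal dimension could still fail to be open, and distinct $\caV(d_1,d_2)$ could in principle sit inside a common component. So I would flag this as the one place where the earlier theorem is indispensable, rather than any delicate argument within the present lemma.
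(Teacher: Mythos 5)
Your proof is correct and follows essentially the same route as the paper: Lemma~\ref{lem:Z} makes the parametrisation of \cite{LofM19} compatible with the smooth structure from Theorem~\ref{thm:smooth}, the dimension match $8(g-1)=\dim_\C\caV$ gives openness of each $\caV(d_1,d_2)$, and disjoint open connected pieces covering $\caV$ must be its connected components. The only difference is cosmetic: the paper simply cites the connectedness of the parametrising family from \cite{LofM19}, whereas you re-derive it (vector bundle over the complement of the incidence divisor in $S^{d_1}\Sigma_c\times S^{d_2}\Sigma_c$, fibred over $\caT_g$), which is a harmless elaboration.
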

To complete the proof of Theorem \ref{thm:connected} we need two more lemmas. Both concern $\tau>0$. The first says that 
$\caV(d_1,d_2)$ is disconnected from $\caW_\tau$ when $d_1\neq 0$. The second says $\caV(0,d_2)$ is connected to
$\caW_\tau$.
\begin{lem}
For $\tau>0$, $\oV(d_1,d_2)\cap\caW_\tau=\emptyset$ when $d_1\neq 0$.
\end{lem}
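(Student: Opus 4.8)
The plan is to isolate a numerical invariant that is constant on $\caV(d_1,d_2)$, takes a strictly smaller value on $\caW_\tau$, and cannot drop under degeneration in the moduli space, so that $\caW_\tau$ is barred from the closure. The natural choice is the degree of the saturated image of $\Phi_1$. On $\caV(d_1,d_2)$ the exact sequence \eqref{eq:exactV} exhibits this saturated image as the line subbundle $K^{-1}(D_1)\subset V$, of degree $-(2g-2)+d_1$. By contrast a point $w$ of $\caW_\tau$ is an \emph{unbranched} $\pm$-holomorphic immersion with $\Phi_2=0$ and $\Phi_1$ nowhere vanishing, so there the image of $\Phi_1$ is the subbundle $K^{-1}$, of degree $-(2g-2)$. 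Since $d_1\neq 0$ forces $d_1>0$ (it is the degree of an effective divisor), these two values differ, and everything reduces to a semicontinuity statement.

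So suppose for contradiction that some $w\in\caW_\tau$ lies in $\oV(d_1,d_2)$. After reducing a convergent sequence to a holomorphic arc (by curve selection, the moduli space being a complex analytic set), there is a family $(E_s,\Phi_s)\in\caV(d_1,d_2)$, $s$ in a punctured disc, with $(E_s,\Phi_s)\to w=(E_0,\Phi_0)$ as $s\to 0$. For $s\neq 0$ let $L_s=\overline{\im(\Phi_{1,s})}\cong K^{-1}(D_1^{(s)})\subset V_s$ be the saturated image, a line subbundle of degree $-(2g-2)+d_1$. The first step is to take the limit of the $L_s$: by properness of the relative Quot scheme (equivalently, the valuative criterion applied to quotients of $V$ of fixed Hilbert polynomial), after shrinking and a finite base change the $L_s$ extend to a flat family whose central member is a rank one subsheaf $\ell_0\subset V_0$ of degree $-(2g-2)+d_1$. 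The inclusions $\im(\Phi_{1,s})\subseteq L_s$ then survive the limit: since $\Phi_{1,0}\neq 0$, the image $\im(\Phi_{1,s})\cong K^{-1}$ forms a flat family with central member $\im(\Phi_{1,0})\cong K^{-1}$, and flatness yields
\[
K^{-1}=\im(\Phi_{1,0})\subseteq\ell_0\subseteq\bar\ell_0\subseteq V_0,
\]
where $\bar\ell_0$ denotes the saturation of $\ell_0$, a line subbundle with $\deg\bar\ell_0\geq\deg\ell_0=-(2g-2)+d_1$.

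The contradiction is then immediate: since $K^{-1}\subseteq\bar\ell_0$ are both rank one and $\deg\bar\ell_0>-(2g-2)=\deg K^{-1}$, the inclusion is proper, so $K^{-1}$ is not saturated in $V_0$; equivalently $\Phi_{1,0}$ vanishes on a divisor of degree at least $d_1>0$. This is impossible for $w\in\caW_\tau$, whose $\Phi_{1,0}$ is nowhere zero. Hence $\oV(d_1,d_2)\cap\caW_\tau=\emptyset$.

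I expect the hard part to be exactly the semicontinuity step of the second paragraph. Because the holomorphic bundle $V_s$ itself degenerates as $s\to 0$ (the limit $w$ sits on a different stratum of the nilpotent cone $\caN^c$), one cannot simply restrict a fixed subbundle across the central fibre; one must instead manufacture the limiting subsheaf $\ell_0$ via properness of the Quot scheme and then verify that the factorisation $\im(\Phi_1)\subseteq L$ persists in the limit. This is the closure-level counterpart of Proposition \ref{prop:limit0}(ii): along a single $\Ct$-orbit the limit $\lim_{t\to 0}(E,t\Phi)$ is the length two Hodge bundle $(E,\Phi_1)$, which lands in $\caW_\tau$ precisely when $\Phi_1$ has no zeroes, i.e.\ when $d_1=0$; the argument above is what upgrades that single-orbit observation to arbitrary degenerations.
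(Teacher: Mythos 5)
Your argument is correct in strategy and genuinely different from the paper's. The paper never leaves the moduli space: for fixed $c$ it identifies $\caW_\tau$ with the open subset $C_0$ of the critical manifold $C$ of minima of $\fE=\|\Phi\|^2_{L^2}$, takes a sequence in the unstable manifold $\caU_{C'}$ corresponding to $\oV_c(d_1,d_2)$ converging into $C_0$, observes that the tail of the sequence lies in the stable manifold $\caS_C$ (energy below the penultimate critical value), uses that $\caS_C\to C$ is a vector bundle so that $(E,\Phi)\mapsto\lim_{t\to 0}(E,t\Phi)$ is continuous there, and then invokes Proposition \ref{prop:limit0} to see that all these $t\to 0$ limits have $\Phi_1$ vanishing on a divisor of degree $d_1>0$, i.e.\ lie in the closed subset $C\setminus C_0$; hence so does the limit, a contradiction. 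You instead prove a sheaf-theoretic semicontinuity statement: the saturated image of $\Phi_1$ has degree $-(2g-2)+d_1$ along $\caV(d_1,d_2)$, and in a flat limit this degree cannot drop to $-(2g-2)$ without $\Phi_1$ acquiring zeros. Your route uses neither the Morse-theoretic structure of the nilpotent cone nor Proposition \ref{prop:limit0}, is more self-contained, and yields slightly more (any stable limit of points of $\caV(d_1,d_2)$ has $\Phi_1$ vanishing to degree at least $d_1$); the paper's route is shorter because it leans on structural results ($\caS_C$, $\caU_C$ as Morse strata, the classification of $t\to 0$ limits) already established.

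There is, however, one step you use silently, and it is the crux on your route: passing from convergence $(E_s,\Phi_s)\to w$ in the \emph{coarse} moduli space to an honest holomorphic family of Higgs bundles over a disc whose central fibre is isomorphic to $w$. Everything you do afterwards (the relative Quot scheme, flatness over the disc) presupposes a rank two bundle with Higgs field on $\Delta\times\Sigma$ restricting to $(E_s,\Phi_s)$ at every $s$ \emph{including} $s=0$; an arc in the moduli space does not hand you this, and indeed this is precisely the subtlety behind $t\to 0$ limits (the naive family $(E,t\Phi)$ has central fibre $(E,0)$, which is not the moduli-space limit). The gap is fillable exactly because $w\in\caW_\tau$ is a stable Higgs bundle: near a stable point the moduli space carries a local universal family (after a finite base change to kill finite automorphisms), and pulling it back along your arc gives the required family with central fibre genuinely isomorphic to $w$. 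You should make this explicit, since it is where the hypothesis that the limit lies in $\caW_\tau$, rather than at an arbitrary boundary point, enters your argument. Two further remarks. First, properness of Quot alone does not extend a section over $\Delta^*$ across the puncture (a holomorphic section of a proper family over $\Delta^*$ can have an essential singularity); here the extension is legitimate because $L_s$ is canonically the saturation of $\im\Phi_{1,s}$, so the whole family of quotients is the restriction of a single coherent quotient over $\Delta\times\Sigma$, obtained by saturating $\im\Phi_1$ inside the total space of the family. Second, once the family through $w$ exists you can bypass Quot schemes entirely: for $s\neq 0$ each $\Phi_{1,s}$ has a zero $p_s\in\Sigma$, by compactness a subsequence $p_{s_k}\to p$ converges, and continuity of $\Phi_1$ on the total space forces $\Phi_{1,0}(p)=0$, contradicting $w\in\caW_\tau$; the flat-limit bookkeeping is only needed if you want the sharper degree statement.
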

\begin{proof} 
It suffices to prove that this is true for each fixed conformal structure $c$. So fix $c$ and $\tau>0$, and let $C$ be the critical
manifold of minima for the Higgs field energy $\fE = \|\Phi\|_{L^2}^2$ for this value of $\tau$. Let $C_0\subset C$ be the open
subset of Higgs bundles for which $\Phi$ has no zeroes. Let $C'$ be the critical manifold of $\fE$ for which
$\oV_c(d_1,d_2)$ corresponds to $\oU_{C'}$, the closure of the unstable manifold of the downward gradient
flow of $\fE$ (equally, the unstable manifold for the $\Ct$-action). The assertion is that $\caU_{C'}\cap C_0=\emptyset$
whenever $d_1\neq 0$. Suppose $(E,\Phi)\in \oU_{C'}\cap C_0$, then it is the limit of a sequence $(E_k,\Phi_k)$ of Higgs
bundles in $\caU_{C'}$. We may assume without loss of generality that each $\|\Phi_k\|_{L^2}^2$ lies below the penultimate
critical value of $\fE$, and therefore all $(E_k,\Phi_k)$ lie in the stable manifold $\caS_C$. Taking the limit as
$t\to 0$ is a continuous map (since $\pi:\caS_C\to C$ is a vector bundle) and therefore 
\[
(E,\Phi)= \lim_{k\to\infty} (E_k^0,\Phi_k^0),
\]
where $(E_k^0,\Phi_k^0) = \lim_{t\to 0}(E_k,t\Phi_k)$. By Prop.\  \ref{prop:limit0} each $\Phi_k^0$ has zeroes unless
$d_1=0$. Therefore $(E,\Phi)$ lies in the closed subset $C\setminus C_0$.
\end{proof}
\begin{lem}
For every choice of $0<d_2<3(g-1)$ there is a Higgs bundle $(E,\Phi)$ in $\caV(0,d_2)$ for which
$\lim_{t\to 0}(E,t\Phi)$ lies in $\caW_\tau$ for $\tau=\tfrac23 d_2$.
\end{lem}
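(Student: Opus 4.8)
The plan is to realise the required Higgs bundle as one whose $t\to 0$ limit is governed by case (ii) of Proposition \ref{prop:limit0}. Recall that for $d_1=0$ the divisor $D_1$ is trivial, so $\Phi_1$ is nowhere zero and $\tau=\tfrac23 d_2>0$. If we can produce a stable Higgs bundle $(E,\Phi)\in\caV(0,d_2)$ whose underlying rank two bundle $V$ has the property that every line subbundle $L\subset V$ satisfies $\deg(L)<\tfrac13\deg(V)$, then case (ii) applies and gives $\lim_{t\to 0}(E,t\Phi)=(E,\Phi_1)$, a length two Hodge bundle with $\Phi_2=0$ and $\Phi_1$ nowhere vanishing. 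Since for $\tau>0$ the locus $\caW$ consists exactly of the Higgs bundles with $\Phi_2=0$, and here $\Phi_1$ has no zeroes, this limit lies in $\caW_\tau$. Thus the whole problem reduces to the existence, for some extension class, of such a $V$.

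First I would fix any effective divisor $D_2$ of degree $d_2$ and set $A=K^{-1}$, $B=K(-D_2)$, so that $V$ is built from a class $\xi\in H^1(K^{-2}(D_2))=H^1(AB^{-1})$ via the extension \eqref{eq:exactV}. Here $\deg(V)=-d_2$, so the target bound is $\deg(L)<-\tfrac13 d_2$, and $\deg(AB^{-1})=d_2-4(g-1)<0$, whence $H^0(AB^{-1})=0$ and $\dim H^1(AB^{-1})=5(g-1)-d_2>0$. A line subbundle contained in $A$ has degree at most $2-2g<-\tfrac13 d_2$, so these never destabilise; the only threat is a subbundle $L$ of degree $\ge -\tfrac13 d_2$ projecting nontrivially to $B$. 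Such an $L$ has the form $L=B(-D')$ for an effective divisor $D'$ of degree $k=\deg(B)-\deg(L)\le 2(g-1)-\tfrac23 d_2$, and its inclusion $L\hookrightarrow B$ lifts to a subbundle of $V$ precisely when $\xi$ lies in the kernel of the multiplication map $m_{D'}\colon H^1(AB^{-1})\to H^1(AB^{-1}(D'))$.

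The hard part is to rule out all these potential destabilising subbundles simultaneously, and I would do this by a dimension count. From the torsion sequence $0\to AB^{-1}\to AB^{-1}(D')\to\mathcal T\to 0$, with $\mathcal T$ of length $k$, the space $\ker(m_{D'})$ is a quotient of $H^0(\mathcal T)$, so $\dim\ker(m_{D'})\le k$. Hence for each fixed $k$ the incidence variety $\{(\xi,D')\in\P H^1(AB^{-1})\times S^k\Sigma_c:\xi\in\P\ker(m_{D'})\}$ has dimension at most $(k-1)+k=2k-1$. Because $k\le 2(g-1)-\tfrac23 d_2$ we get $2k-1\le 4(g-1)-\tfrac43 d_2-1<5(g-1)-d_2-1=\dim\P H^1(AB^{-1})$, the strict inequality being equivalent to $-\tfrac13 d_2<g-1$, which always holds. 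Therefore the projection of each such incidence variety to $\P H^1(AB^{-1})$ is a proper subvariety, and as $k$ runs over the finitely many admissible values their union still misses a generic $\xi$.

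For such a generic $\xi$ the bundle $V$ has no line subbundle of degree $\ge-\tfrac13 d_2$, so case (ii) of Proposition \ref{prop:limit0} applies. The numerical conditions \eqref{eq:ineq} with $d_1=0$ and $0<d_2<3(g-1)$ guarantee that $(E,\Phi)$ is a stable Higgs bundle lying in $\caV(0,d_2)$, and by the first paragraph its limit as $t\to 0$ lies in $\caW_\tau$ with $\tau=\tfrac23 d_2$. I expect the genericity estimate of the previous paragraph, namely bounding the locus of bad extension classes uniformly over all candidate destabilising line subbundles, to be the only real obstacle; everything else is a direct application of results already established.
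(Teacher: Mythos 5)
Your proof is correct, and its skeleton coincides with the paper's: both arguments reduce the lemma to case (ii) of Proposition \ref{prop:limit0}, so that the entire content is to produce, for a fixed $D_2$ of degree $d_2$, an extension class $\xi\in H^1(K^{-2}(D_2))$ whose bundle $V$ in \eqref{eq:exactV} has no line subbundle of degree $\geq\tfrac13\deg(V)$; the identification of the resulting limit $(E,\Phi_1)$ as a point of $\caW_\tau$ is then the same observation in both. Where you genuinely differ is in how the genericity is established. The paper proves the stronger statement that $\xi$ can be chosen with $V$ \emph{stable}: twisting by $K$ turns this into the existence of an extension $0\to 1\to\lambda\to K^2(-D_2)\to 0$ with $s(\lambda)\geq 1$, which is settled by citing \cite[Prop 1.1]{LanN} together with Lange's dimension bound \cite{Lan} for the secant varieties $\Sec_m(\Sigma_c)\subset\P H^0(K^3(-D_2))^*$. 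You instead prove only what case (ii) needs, by a direct incidence count: an offending subbundle must be of the form $K(-D_2-D')$, it lifts to $V$ precisely when $\xi\in\ker(m_{D'})$, the torsion sequence gives $\dim\ker(m_{D'})\leq\deg D'=k$, and adding fibre and base dimensions bounds the bad locus by $2k-1<5(g-1)-d_2-1$. This is in substance the same geometry --- under Serre duality $\P\ker(m_{D'})$ is exactly the secant space spanned by $D'$, so your incidence bound is the classical proof of the secant-variety estimate the paper invokes --- but inlining it makes the lemma self-contained, avoids verifying the numerical hypotheses $4-d\leq s\leq d$ of \cite{LanN}, and in fact your count has enough slack to recover stability of $V$ as well: with the threshold $\tfrac12\deg(V)$ one needs $k\leq 2(g-1)-\tfrac12 d_2$, and then $2k-1<5(g-1)-d_2-1$ reduces to $0<g-1$. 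The price is half a page of bookkeeping in place of a one-line citation; the paper's route buys brevity and a precise identification of the bad locus as a secant variety.
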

\begin{proof}
Since $d_1=0$ the Higgs bundle $(E,\Phi)$ must correspond to an extension of the form
\[
0\to K^{-1}\stackrel{\Phi_1}{\to} V\stackrel{\Phi_2}{\to} K(-D_2)\to 0,
\]
where $\deg(D_2)=d_2$. By Prop.\ \ref{prop:limit0}(ii) it suffices to show we can find such an extension for which $V$ is
a stable bundle. This is equivalent to the existence of an extension
\[
0\to 1\to \lambda \to K^2(-D_2)\to 0,
\]
for which $s(\lambda) = \deg(\lambda)-2\max\deg(L)\geq 1$, where the maximum is taken over all line subbundles of
$\lambda$. Note that such extensions are parametrised by the space $H^1(K^{-2}(D_2))\simeq H^0(K^3(-D_2))^*$.
Such a situation is covered by \cite[Prop 1.1]{LanN}. Specifically, define $d=\deg(K^2(-D_2)) = 4(g-1)-d_2$
and let $s=1$ when $d$ is odd and $s=2$ when $d$ is even. This satisfies the conditions $4-d\leq s\leq d$ required to
apply \cite[Prop 1.1]{LanN} (in particular note that the critical case $4-d=d$ can only occur for $d=2$, in which case $4-d=s=d$). 
Then by \cite[Prop 1.1]{LanN}
$s(\lambda)\geq s$ if and only if the secant variety 
\[
\Sec_{\tfrac12(d+s-2)}(\Sigma_c)\subseteq \P H^0(K^3(-D_2))^*,
\]
is a proper subvariety. Since $\deg(K^3(-D_2)) > 2(g-1)$ the Riemann-Roch theorem gives the dimension of $\P H^0(K^3(-D_2))^*$
as $5(g-1)-d_2-1=d+g-2$. By \cite{Lan} this secant variety has dimension $d+s-3\leq d-1$ is therefore a proper subvariety for
$g\geq 2$.
\end{proof}

\end{document}